\numberwithin{equation}{section}
\theoremstyle{plain}
\newtheorem{theorem}{Theorem}[section]
\newtheorem{proposition}[theorem]{Proposition}
\newtheorem{lemma}[theorem]{Lemma}
\theoremstyle{remark}
\newtheorem{remark}[theorem]{Remark}
\newtheorem{example}[theorem]{Example}
\newtheorem*{ack}{Acknowledgement}
\theoremstyle{definition}
\newcommand{\BB}{\mathcal{B}}
\newcommand{\HH}{\mathcal{H}}
\newcommand{\QQ}{\mathcal{Q}}
\newcommand{\R}{\mathbb{R}}
\newcommand{\Q}{\mathbb{Q}}
\newcommand{\N}{\mathbb{N}}
\newcommand{\iii}{\mathtt{i}}
\newcommand{\jjj}{\mathtt{j}}
\newcommand{\eps}{\varepsilon}
\newcommand{\roo}{\varrho}
\renewcommand{\atop}[2]{\genfrac{}{}{0pt}{}{#1}{#2}}
\DeclareMathOperator{\esssup}{ess\,sup}
\DeclareMathOperator{\essinf}{ess\,inf}
\DeclareMathOperator{\dimloc}{dim_{loc}}
\DeclareMathOperator{\udimloc}{\overline{dim}_{loc}}
\DeclareMathOperator{\ldimloc}{\underline{dim}_{loc}}
\DeclareMathOperator{\dimhomo}{dim_{hom}}
\DeclareMathOperator{\udim}{\overline{dim}}
\DeclareMathOperator{\ldim}{\underline{dim}}
\DeclareMathOperator{\dist}{dist}
\DeclareMathOperator{\diam}{diam}
\DeclareMathOperator{\por}{por}
\DeclareMathOperator{\spt}{spt}
\begin{document}

\title{Local homogeneity and dimensions of measures}

\author{Antti K\"aenm\"aki}
\author{Tapio Rajala}
\address{Department of Mathematics and Statistics \\
         P.O. Box 35 (MaD) \\
         FI-40014 University of Jyv\"askyl\"a \\
         Finland}
\email{antti.kaenmaki@jyu.fi}
\email{tapio.m.rajala@jyu.fi}

\author{Ville Suomala}
\address{Department of Mathematical sciences\\
P.O Box 3000\\
FI-90014 University of Oulu\\
Finland }
\email{ville.suomala@oulu.fi}

\thanks{The authors acknowledge the support of the Academy of Finland, projects \#114821, \#126976, and \#137528}
\subjclass[2000]{Primary 28A12, 28A80; Secondary 28A78, 28D20}
\keywords{conical densities, local dimension, local homogeneity, local
  $L^q$-spectrum, porosity}
\date{\today}

\begin{abstract}
We introduce two new concepts, local homogeneity and local $L^q$-spectrum, both of which are tools that can be used in studying the local structure of measures. The main emphasis is given to the examination of local dimensions of measures in doubling metric spaces. As an application, we reach a new level of generality and obtain new estimates in the study of conical densities and porous measures.
\end{abstract}

\maketitle

\tableofcontents

\section{Introduction} \label{sec:intro}

In geometric measure theory, it is common to encounter problems of the
following type: given a measure $\mu$ and a set $A$ of positive/full
$\mu$-measure, we have some
local geometric information (on density, porosity, tangent measures, etc.) around all
points of the set (or in a set of positive/full measure) and we want
to gain some global information (on dimension, rectifiability,
measure, etc.) from this. For example, if the set is porous in the
sense that it contains large holes of fixed relative size around all of its
points in all small scales, it is reasonable to estimate
the dimension of the set from above using this information, see
\cite{Trocenko1981, Mattila1988, Salli1991, Mattila1995, KoskelaRohde1997, BeliaevSmirnov2002, KaenmakiSuomala2004, JarvenpaaJarvenpaaKaenmakiSuomala2005, Rajala2009}. Thus, if we knew how the set (or a
measure) is distributed in small balls, we would be able to bound its
dimension. On the other hand, if $\mu$ is a
measure of given dimension on a Euclidean space, it is a
classical problem to estimate how it is distributed in
different directions or cones,
see \cite{Besicovitch1929, Besicovitch1938, Marstrand1954,
  Federer1969,
  Salli1985, Falconer1985, Mattila1988, Mattila1995, Lorent2003, Suomala2005a,
  KaenmakiSuomala2004, KaenmakiSuomala2008,
  CsornyeiKaenmakiRajalaSuomala2010}.

In the study of fractals and dynamical systems, it is natural to analyse
properties of measures using globally observable parameters arising
from the asymptotic behaviour of the system, such as the Lyapunov exponents.
The entropy and $L^q$-dimensions are concepts that measure the
average distribution of the measure.
In many cases, these global
characteristics can then be related to the local
regularity properties of the measure such as exact dimensionality and
also to the values of the local dimension maps, see \cite{Young1982, Cutler1990,
Ngai1997, Heurteaux1998, BatakisHeurteaux2002, FanLauRao2002}.

In this
article, the most important objects of interest
are the upper and lower local dimensions of measures. Large part of the analysis on
measures aims at estimating these dimensions. The essential suprema
and infima of the local dimensions lead to the upper and lower
Hausdorff and packing dimensions of the measure whereas
investigating the level set structure of the local dimension maps
leads to multifractal analysis. The purpose of this article
is to introduce two new concepts, local homogeneity
and local $L^q$-spectrum.
Both of these concepts are tools that can be used in
studying the local structure of measures.

The local homogeneity and local $L^q$-spectrum are
of different nature since the order of taking limits in their definitions is different. In defining the local homogeneity, we
first let the scale tend to zero and only after that increase the
resolution. This allows us to handle non-uniform properties, like porosity, with
ease. On the other hand, the local $L^q$-spectrum sees some slight
differences in the behaviour of the measure to which the local
homogeneity is blind. This difference is made manifest in examples in \S \ref{sec:examples}.

We will next describe our main results. For notation and definitions of
the basic concepts, we refer to \S \ref{sec:notation} below.
In \S \ref{sec:local_Lq}, we introduce local versions of the
classical $L^q$-spectra and dimensions. Using these concepts, in \S \ref{sec:Lqresults}--\ref{sec:entropydim} we
obtain local metric space versions of the results of \cite{Ngai1997,
  Heurteaux1998, Olsen2000, FanLauRao2002} on the relations between the
Hausdorff, entropy, packing, and $L^q$-dimensions for measures in
Euclidean spaces.
In
Theorems \ref{thm:homodim} and \ref{thm:main}, we will prove our main results concerning
the local homogeneity of measures. We show that for any locally finite Borel regular
measure $\mu$, the upper local dimension $\udimloc(\mu,x)$  is bounded
from above by the local homogeneity dimension $\dimhomo(\mu,x)$ at
$\mu$-almost all points. Here $\dimhomo(\mu,x)$ is the
infimum of  exponents $s$ so that ``large parts'' of $B(x,r)$ in terms of
$\mu$ can be covered by $\delta^{-s}$ balls of radius $\delta r$ for all small
$r,\delta>0$; see \eqref{eq:homo_def} for a detailed definition. Using
our results on the local homogeneity, we will obtain new
estimates on the dimension of porous measures, see Theorems
\ref{thm:kpor} and \ref{thm:metricporo}. In particular, these results
settle problems left open in
\cite{JarvenpaaJarvenpaaKaenmakiRajalaRogovinSuomala2007,
  RajalaSmirnov2007}. As another application of the local homogeneity
estimates, we obtain in Theorem \ref{thm:conical} a new upper conical density result for measures
with large packing dimension. This improves a result of
\cite{CsornyeiKaenmakiRajalaSuomala2010} where a corresponding statement
was proved for the Hausdorff dimension.

Although the definitions of local homogeneity and local $L^q$-spectrum
make sense in any metric space in which balls are totally bounded, we
will consider only doubling metric spaces
since the doubling condition is needed in most of our proofs.

In many recent studies the relations between the dimension and geometry
of measures in Euclidean spaces are studied using a probabilistic approach
and the dyadic self-similar structure of $\R^n$. For instance, see
\cite{Heurteaux2007, JarvenpaaJarvenpaa2005, HochmanShmerkin2012, Shmerkin2012, SahlstenShmerkinSuomala2011}.
Since we work in a general doubling metric space, our approach is different
and slightly less probabilistic. We remark that the paper \cite{SahlstenShmerkinSuomala2011}
was mostly inspired by the present work.

\section{Preliminaries} \label{sec:notation}

\subsection{Basic notation}
In writing down constants we often use notation such as $c=c(\cdots)$
to emphasize that the constant depends only on the parameters listed
inside the parentheses.

We work on a metric space $(X,d)$ which we usually assume to be \emph{doubling}.
This means that there is $N=N(X)\in\N$ (\emph{the doubling constant of}) $X$ such that any closed ball $B(x,r) = \{ y
\in X : d(x,y) \le r \}$ with centre $x \in X$ and radius $r>0$ can be
covered by $N$ balls of radius $r/2$.
Since we use only one distance $d$ in the space $X$, we simply denote $(X,d)$ by $X$.

Notice that even if $x \ne y$ or $r\neq t$, it may happen that
$B(x,r)=B(y,t)$. For notational convenience, we keep to the convention that each ball comes with a fixed center and radius.
This makes it possible to use notation such as
$5B=B(x,5r)$ without referring to the centre or radius of the ball $B=B(x,r)$.

In this article, \emph{a measure} exclusively refers to a nontrivial Borel regular (outer) measure
defined on all subsets of $X$ so that bounded sets have finite
measure.

We call any countable collection $\BB$ of pairwise disjoint closed
balls a \emph{packing}. It is called a \emph{packing of $A$} for a
subset $A \subset X$ if the centres of the balls of $\BB$ lie in the
set $A$, and it is a \emph{$\delta$-packing} for $\delta > 0$ if all
of the balls in $\BB$ have radius $\delta$. A $\delta$-packing $\BB$
of $A$ is termed \emph{maximal} if for every $x \in A$ there is $B \in
\BB$ so that $B(x,\delta) \cap B \ne \emptyset$. Note that if $\BB$ is
a maximal $\delta$-packing of $A$, then $2\BB$ covers $A$.
Here $2\BB = \{ 2B : B \in \BB \}$.

Observe that a doubling metric space is separable. Hence for each
$\delta>0$ and $A \subset X$ there exists a maximal $\delta$-packing
of $A$. Moreover, the $5r$-covering theorem is applicable in every doubling metric space; see \cite[Theorem 2.1]{Mattila1995}.

Instead of $\delta$-packings defined above, the theory developed in this paper
could be presented by using $\delta$-separated sets, i.e.\ sets $\{x_i\}\subset A$
for which $d(x_i,x_j)>\delta$ whenever $x_i\neq x_j$. Yet another option would be to
define the necessary concepts using partitions or generalised dyadic cubes. We chose
the packing approach mainly because of personal taste and since we wanted our packing
balls to be geometrically (and not only algebraically) disjoint. The partition definition
is sometimes more useful in computations. In \cite{KaenmakiRajalaSuomala2012b}, we
use that approach to develop some multifractal analysis in metric spaces.

The doubling property can be stated in several equivalent ways. For instance, the following formulations are sometimes convenient.
The proof is a simple exercise (see e.g.\ \cite{Luukkainen1998, Heinonen2001}).

\begin{lemma} \label{thm:covering_thm}
  For a metric space $X$, the following statements are equivalent:
\begin{enumerate}
  \item $X$ is doubling. \label{covering1}
  \item There are $s>0$ and $c>0$ such that for all $R>r>0$ any ball of radius $R$ can be
        covered by $c(r/R)^{-s}$ balls of radius $r$. \label{covering2}
  \item There are $s>0$ and $c>0$ such that if $R>r>0$ and $\BB$ is an $r$-packing of a
        closed ball of radius $R$, then the cardinality of $\BB$ is at most $c(r/R)^{-s}$. \label{covering3}
  \item For every $0<\lambda<1$ there is a constant $M=M(X,\lambda) \in
        \N$, satisfying the following: If $\BB$ is a collection of closed
        balls of radius $\delta > 0$ so that $\lambda\BB$ is pairwise disjoint,
        then there are $\delta$-packings $\{ \BB_1,\ldots,\BB_M \}$ so that
        $\BB=\bigcup_{i=1}^M\BB_i$. \label{covering4}
  \item There is $M=M(X)\in\N$ such that if $A\subset X$ and $\delta>0$, then there are
        $\delta$-packings of $A$, $\BB_1,\ldots,\BB_M$ whose union covers $A$. \label{covering5}
\end{enumerate}
\end{lemma}

\begin{remark}\label{rem:covering}
(1) It follows by elementary arguments that $s=\log_2 N$ will do in \eqref{covering2} and \eqref{covering3}.
The infimum over all admissible exponents $s$ in \eqref{covering2} and \eqref{covering3} is usually called
the \emph{Assouad dimension} of $X$ (see \cite{Luukkainen1998, Heinonen2001}). Thus, doubling metric spaces
are precisely the metric spaces with finite Assouad dimension.

(2) Observe that \eqref{covering5} is Besicovitch's covering theorem (\cite[\S 2.7]{Mattila1995})
for balls with equal radius. The following consequence of \eqref{covering5} is sometimes very
useful: If $\delta>0$, $\mu$ is a measure on $X$ and $A\subset X$, then there is a $\delta$-packing $\BB$ of $A$ such that
\begin{equation}\label{packingestimate}
\sum_{B\in\BB}\mu(B)\ge c\mu(A).
\end{equation}
Here $c>0$ depends only on the doubling constant $N$.
\end{remark}

We say that a measure $\mu$ on $X$ is \emph{doubling} if there is a constant $c \ge 1$ so that
\begin{equation*}
  0<\mu\bigl( B(x,2r) \bigr) \le c\mu\bigl( B(x,r) \bigr) < \infty
\end{equation*}
for all $x \in X$ and $r>0$. A complete doubling metric space always supports doubling measures;
see \cite{VolbergKonyagin1984, VolbergKonyagin1987, LuukkainenSaksman1998, Wu1998, KaenmakiRajalaSuomala2012a}.
Recall that the \emph{support} of a measure $\mu$, denoted by $\spt(\mu)$, is the smallest closed subset
of $X$ with full $\mu$-measure. Furthermore, we say that
a measure $\mu$ on $X$ is \emph{$s$-regular} (for $s>0$) if there are
constants $a,b>0$ so that
\begin{equation*}
  a r^s \le \mu\bigl( B(x,r) \bigr) \le b r^s
\end{equation*}
for all $x \in \spt(\mu)$ and $0<r\le \diam(X)$. It is clear that each $s$-regular
measure is doubling. A metric space $X$ is called
\emph{$s$-regular} if it carries an $s$-regular measure $\mu$ with
$\spt(\mu)=X$.
In this case, a simple volume argument can be used to verify the conditions
\eqref{covering2} and \eqref{covering3} of Lemma \ref{thm:covering_thm}. Therefore
an $s$-regular metric space is doubling. More generally, each metric space carrying
a doubling measure is a doubling metric space.

A measure $\mu$ on $X$ has the \emph{density point property} if
\begin{equation}
  \lim_{r \downarrow 0} \frac{\mu\bigl( A \cap B(x,r) \bigr)}{\mu\bigl( B(x,r) \bigr)} = 1
\end{equation}
for $\mu$-almost all $x \in A$ whenever $A \subset X$ is
$\mu$-measurable.
In general, the density point property is not necessarily valid for all measures in a
doubling metric space; see Example \ref{ex:nodpp0}. Nevertheless, in the proofs, it can
be often replaced by the following weaker result.

\begin{lemma}\label{lemma:weakdpp}
If $\mu$ is a measure on a separable metric space $X$ and $A\subset X$ is $\mu$-measurable, then
\[\lim_{r\downarrow0}\frac{\mu\bigl( B(x,r) \setminus A \bigr)}{\mu\bigl( B(x,5r) \bigr)}=0\]
for $\mu$-almost all $x\in A$.
\end{lemma}

\begin{proof}
Define $E_\varepsilon=\{x\in A : \limsup_{r\downarrow0} \mu\bigl( B(x,r) \setminus A \bigr)/\mu\bigl( B(x,5r) \bigr) > \varepsilon\}$ for all $\eps>0$. The claim follows if we can show that $\mu(E_\varepsilon)=0$ for all $\varepsilon>0$. Fix $\varepsilon>0$ and for $\eta>0$, let $G_\eta$ be an open set containing $E_\varepsilon$ such that $\mu(G_\eta\setminus E_\varepsilon)<\eta$. Applying the $5r$-covering theorem for the collection $\{B(x,r) : x\in E_\varepsilon \text{ and $r>0$ such that } B(x,r)\subset G_\eta \text{ and } \mu\bigl( B(x,r)\setminus A \bigr) > \varepsilon\mu\bigl( B(x,5r) \bigr)\}$,
we obtain a disjoint subcollection $\BB$ such that $5\BB$ covers $E_\varepsilon$. Thus
\begin{align*}
\varepsilon \mu(E_\varepsilon)\le\varepsilon\sum_{B\in\BB}\mu(5B)<\sum_{B\in\BB}\mu(B\setminus A)\le\mu(G_\eta\setminus A)\le\mu(G_\eta\setminus E_\eps)<\eta.
\end{align*}
Letting $\eta\downarrow 0$ implies $\mu(E_\varepsilon)=0$, as required.
\end{proof}

\begin{remark} \label{rem:doubling}
(1) The constant $5$ in Lemma \ref{lemma:weakdpp} can be replaced by any constant $C>2$.
This is because in the $5r$-covering theorem, we may replace $5$ by any such $C$.
Furthermore, if Besicovitch's covering theorem holds in $X$, then the constant $5$ in
Lemma \ref{lemma:weakdpp} can be replaced by $1$. This can be seen just by applying
Besicovitch's covering theorem (instead of the $5r$-covering theorem) in the proof of
Lemma \ref{lemma:weakdpp}. In particular, this observation shows that in Euclidean spaces,
every measure has the density point property.

(2) The following upper density point property is true for all measures in any doubling metric
space $X$: If $\mu$ is a measure on $X$ and $A \subset X$ is $\mu$-measurable, then
\begin{equation*}
  \limsup_{r \downarrow 0} \frac{\mu\bigl( A \cap B(x,r) \bigr)}{\mu\bigl( B(x,r) \bigr)} = 1
\end{equation*}
for $\mu$-almost all $x \in A$. This follows from Lemma \ref{lemma:weakdpp} and the fact that even if
a measure is not doubling, it has arbitrary small doubling scales at each typical point,
see e.g. \cite[Lemma 2.2]{CsornyeiKaenmakiRajalaSuomala2010}.
\end{remark}

\subsection{Local dimensions}
We are mostly interested in estimating the \emph{upper and lower local dimensions of the measure $\mu$ at $x$} defined by
\begin{align*}
  \udimloc(\mu,x) &= \limsup_{r \downarrow 0} \log\mu\bigl( B(x,r)  \bigr)/\log r, \\
  \ldimloc(\mu,x) &= \liminf_{r \downarrow 0} \log\mu\bigl( B(x,r)  \bigr)/\log r,
\end{align*}
respectively. If the upper and lower dimensions agree, we call their
mutual value the \emph{local dimension of the measure $\mu$ at $x$}
and write $\dimloc(\mu,x)$ for this common value.

\begin{remark}
  (1) If $\mu$ is an $s$-regular measure, then trivially $\dimloc(\mu,x) = s$ for all $x \in \spt(\mu)$.

  (2) If $A$ is a Borel set, then $\udimloc(\mu|_A,x) = \udimloc(\mu,x)$ and
      $\ldimloc(\mu|_A,x) = \ldimloc(\mu,x)$ for $\mu$-almost all $x \in A$.
      This can be proven similarly as Lemma \ref{lemma:weakdpp} once we observe that if the statement
      fails, then there is $\varepsilon>0$ such that
      $\limsup_{r\downarrow 0}r^{\varepsilon}\mu\bigl(B(x,r)\bigr)/\mu\bigl(A \cap B(x,5r)\bigr)>0$ in a set of positive measure.
\end{remark}

\subsection{Local $L^q$-spectrum and $L^q$-dimensions}\label{sec:local_Lq}

Let $\mu$ be a measure on $X$, $A\subset X$ a bounded set, $q\in\R$, and $r>0$.
The \emph{(global) $L^q$-spectrum of $\mu$ on $A$} is defined by
\begin{equation} \label{eq:taudeffi}
  \tau_q(\mu,A)=\liminf_{\delta \downarrow 0}
  \frac{\log S_{q}(\mu,A,\delta)}{\log\delta},
\end{equation}
where
\begin{equation*}
  S_{q}(\mu,A,\delta) = \sup\biggl\{ \sum_{B \in \BB} \mu(B)^q :
  \BB \text{ is a $\delta$-packing of } A\cap \spt(\mu) \biggr\}
\end{equation*}
is the \emph{$L^q$-moment sum of $\mu$ on $A$ at the scale $\delta$}.
Furthermore, the \emph{local $L^q$-spectrum of $\mu$ at $x$} is
\begin{equation} \label{eq:spectrum}
  \tau_q(\mu,x) = \lim_{r \downarrow 0} \tau_q\bigl( \mu, B(x,r) \bigr).
\end{equation}

Given $A\subset X$ and $q \ne 1$, we define the \emph{(global) $L^q$-dimension of $\mu$ on $A$} by setting
\begin{equation*}
  \dim_q(\mu,A) = \tau_q(\mu,A)/(q-1)
\end{equation*}
and the \emph{local $L^q$-dimension of $\mu$ at $x$} by
\begin{equation*}
  \dim_q(\mu,x) = \lim_{r \downarrow 0} \dim_q\bigl( \mu,B(x,r) \bigr) = \tau_q(\mu,x)/(q-1).
\end{equation*}
We also denote $\tau_q(\mu) = \tau_q(\mu,X)$ and $\dim_q(\mu) = \dim_q(\mu,X)$ in the case $\spt(\mu)$ is bounded.

\begin{remark} \label{rem:tau}
  (1) The limit in \eqref{eq:spectrum} exists as $S_{q}(\mu,A,\delta) \le S_{q}(\mu,B,\delta)$
 whenever $\delta>0$ and $A\subset B$. The use of $\liminf$ in \eqref{eq:taudeffi} guarantees
 the concavity of the $L^q$-spectrum; see Lemma \ref{tauprop}\eqref{tauconcave}.

   (2) If $q\ge0$ and $A$ is closed, then the definition of $\tau_q(\mu,\cdot)$ does not
 change if we ignore $\spt(\mu)$ in the definition of $S_{q}(\mu,\cdot)$. That is, we can
 repeat the definition with
\begin{equation*}
  S_{q}(\mu,A,\delta) = \sup\biggl\{ \sum_{B \in \BB} \mu(B)^q :
  \BB \text{ is a $\delta$-packing of } A \biggr\}
\end{equation*}
(if $q=0$, we interpret $0^q=0$). Also, if $(\delta_n)_{n=1}^\infty$ is a decreasing sequence
tending to $0$ with $\lim_{n\rightarrow\infty}\log\delta_{n+1}/\log\delta_n=1$, then it follows
from Lemma \ref{thm:covering_thm}\eqref{covering5} that the $\liminf$ in the definition of
$\tau_q$ may be taken along the sequence $(\delta_n)_{n=1}^\infty$.
These simple facts will be used frequently.

  (3) If $\mu$ is an $s$-regular measure on $X$ with $\spt(\mu)=X$, then $\dim_q(\mu,A) = s$ for
  all bounded $A\subset X$ with $\mu(A)>0$ and, consequently, $\dim_q(\mu,x)=\dimloc(\mu,x)$
  for all $x \in X$. Indeed, given $q\in\R$, we find constants $0<c_1(A)<c_2(A)<\infty$ so that
  $c_1 \delta^{s(q-1)} \le S_{q}(\mu,A,\delta) \le c_2 \delta^{s(q-1)}$ for all $0<\delta<1$.
  This implies $\tau_q(\mu,A)=s(q-1)$ and thus $\dim_q(\mu,A)=s$.

  (4) There are measures for which $\dim_q(\mu,x)$ is constant almost everywhere, but this constant
  is not the same as $\dim_q(\mu)$; see Examples \ref{ex:eka} and \ref{ex:toka}.

  (5) Recall that for any Borel set $A$ the restriction measure $\mu|_A$ has the same upper and
 lower local dimension as the original measure $\mu$ for $\mu$-almost all points in $A$.
 This is not true for the $L^q$-dimension.
  As an example in the case $q < 1$, take $\mu = \mathcal{L}^2 + \HH^1|_L$
  on $[0,1]^2$, where $\mathcal{L}^2$ is the Lebesgue measure and
  $\HH^1|_L$ is the length measure on a line $L \subset [0,1]^2$.
  Now there exist constants $c_1,c_2 > 0$ so that for every $r>0$ we have
 $S_q\bigl( \mu,B(x,r),\delta \bigr) = c_1r^2\delta^{2(q-1)}$ and
 $S_q\bigl( \mu|_L,B(x,r),\delta \bigr) = c_2r\delta^{q-1}$ for all $\delta > 0$ small enough.
 Thus $\tau_q(\mu,x) = \tau_q(\mu)$ and $\tau_q(\mu|_L,x) = \tau_q(\mu|_L)$ for all $x \in L$.
 Since $\spt(\mu|_L) = \spt(\mu) \cap L$, we also have $\tau_q(\mu|_L) = \tau_q(\mu,L)$. Therefore,
  \begin{equation*}
    \tau_q(\mu,x) = \tau_q(\mu) = 2(q-1) < q-1 = \tau_q(\mu|_L) = \tau_q(\mu,L)
  \end{equation*}
  and
  \begin{equation*}
    \dim_q(\mu,x) = 2 > 1 = \dim_q(\mu|_L,x)
  \end{equation*}
  for all $x \in L$. For $q > 1$ we define a measure on the real line by letting
 $\nu = \mathcal{L}^2 + \sum_{n\in\N}2^{-n} \delta_{q_n}$, where $\{q_1,q_2,\ldots\}$ is
 an enumeration of the rationals. Then $\dim_q(\nu,x)=0$ while $\dim_q(\nu|_{\R\setminus \Q},x)=1$ for all $x\in\R$.
\end{remark}

We list some of the basic properties of the $L^q$-spectrum in the following lemmas.

\begin{lemma}\label{tauprop}
If $\mu$ is a measure on a doubling metric space $X$, $A\subset X$ is a bounded set with $\mu(A)>0$,
$q_0 = \inf\{ q \in \R : \tau_q(\mu,A) > -\infty \}$, and $s$ as in Lemma \ref{thm:covering_thm}(2)--(3), then
  \begin{enumerate}
    \item $\tau_1(\mu,A)=0$, \label{tau10}

    \item $\min\{0,(q-1)s\} \leq \tau_q(\mu,A) \leq \max\{0,(q-1)s\}$ for all $0 \le q < \infty$, \label{taubounded}

    \item $0\le \dim_q(\mu,A)\le s$ for all $0 \le q < \infty$ with $q \ne 1$, \label{dimqbounded}

    \item the mapping $q\mapsto\tau_q(\mu,A)$ is concave on $(q_0,\infty)$. \label{tauconcave}

    \item the mapping $q\mapsto\dim_q(\mu,A)$ is continuous and decreasing on both $(q_0,1)$ and $(1,\infty)$. \label{taucont}
\end{enumerate}
Furthermore, if $x\in \spt(\mu)$, then all the claims above remain true if $\tau_q(\mu,A)$ is replaced by
$\tau_q(\mu,x)$ and $\dim_q(\mu,A)$ by $\dim_q(\mu,x)$.
\end{lemma}

\begin{proof}
We prove the claims for $\tau_q(\mu,A)$. The statements for $\tau_q(\mu,x)$ follow by simply taking $A=B(x,r)$
and letting $r\downarrow0$. It suffices to show (2) and (4) since the other claims follow easily from these.
Fix $a\in A$ and define $U=B\bigl( a,\diam(A)+1 \bigr)$.

If $0<\delta<1$ and $\BB$ is any $\delta$-packing of $A$, then Lemma \ref{thm:covering_thm}\eqref{covering3}
gives $M \le C\delta^{-s}$, where $M$ is the cardinality of $\BB$. Therefore H\"older's inequality implies
\begin{equation*}
  \sum_{B \in\BB} \mu(B)^q \le
  \begin{cases}
    \mu(U)^q M^{1-q} \le C^{1-q}\mu(U)^q \delta^{s(q-1)}, &\text{if } 0 \le q \le 1, \\
    \mu( U)^q, &\text{if } q \ge 1.
  \end{cases}
\end{equation*}
In addition, if $\BB$ satisfies \eqref{packingestimate}, then we estimate
\begin{equation*}
  \sum_{B \in\BB} \mu(B)^q \ge
  \begin{cases}
    c^q\mu\bigl( A \bigr)^q, &\text{if } q \le 1, \\
    c^q\mu( A )^q M^{1-q} \ge c^qC^{1-q}\mu(A)^q \delta^{s(q-1)}, &\text{if } q \ge 1.
  \end{cases}
\end{equation*}
The claim (2) follows by taking logarithms and letting $\delta\downarrow 0$.

To show (4), let $\BB$ be a $\delta$-packing of $A \cap \spt(\mu)$. For every $q,p\ge q_0$ and $\lambda\in(0,1)$ we have
  \begin{equation}\label{convex}
    \sum_{B \in \BB} \mu(B)^{\lambda q + (1-\lambda)p} \le
    \biggl(\sum_{B \in \BB} \mu(B)^{q} \biggr)^\lambda
    \biggl(\sum_{B \in \BB} \mu(B)^{p} \biggr)^{1-\lambda}
  \end{equation}
by H\"older's inequality. The proof follows.
\end{proof}

\begin{lemma}\label{prop:tauloc}
If $\mu$ is a measure on a compact doubling metric space $X$, then
\begin{equation*}
\tau_q(\mu) = \min\{ \tau_q(\mu, x) : x \in \spt(\mu) \}
\end{equation*}
for every $q\in\R$. In particular,
\begin{equation*}
  \dim_q(\mu) = \begin{cases}
                  \max\{ \dim_q(\mu,x) : x \in \spt(\mu) \}, &\text{if } q < 1, \\
                  \min\{ \dim_q(\mu,x) : x \in \spt(\mu) \}, &\text{if } q > 1.
               \end{cases}
\end{equation*}
\end{lemma}

\begin{proof}
According to Remark \ref{rem:tau}(1), we have $\tau_q(\mu) \le \tau_q(\mu,x)$
for every $x \in\spt(\mu)$. Since the second claim follows immediately from the
first one, it remains to show that there exists $x \in \spt(\mu)$ for which $\tau_q(\mu,x) \le \tau_q(\mu)$.

First we cover $\spt(\mu)$ with finitely many balls
$\{B(y_i,\tfrac12)\}_{i=1}^{k_1}$, $y_i\in \spt(\mu)$. Then, for
every $j$ and $\delta >0$, we have
\begin{equation}\label{eq:taucomparison}
\begin{split}
  S_q\bigl( \mu,B(y_j,\tfrac12),\delta \bigr) &\le S_q(\mu,X,\delta) \le \sum_{i=1}^{k_1}S_q\bigl( \mu,B(y_i,\tfrac12),\delta \bigr) \\ &\le k_1 \max_{i \in \{ 1,\ldots,k_1 \}}S_q\bigl( \mu,B(y_i,\tfrac12),\delta \bigr).
\end{split}
\end{equation}
Let $(\delta_j)_{j=1}^\infty$ be a decreasing sequence tending to zero so that
\begin{equation*}
  \lim_{j \to \infty}\frac{\log S_q(\mu,X,\delta_j)}{\log \delta_j} = \liminf_{\delta \downarrow 0}\frac{\log S_q(\mu,X,\delta)}{\log \delta}=\tau_q(\mu).
\end{equation*}
Then for every $j \in \N$,
choose $i_j\in\{1,\ldots,k_1\}$ so that
\[
 S_q\bigl( \mu,B(y_{i_j},\tfrac12),\delta_j \bigr) = \max_{i \in \{ 1,\ldots,k_1 \} }S_q\bigl( \mu,B(y_i,\tfrac12),\delta_j \bigr).
\]
Now for some $i \in \{1,
\dots, k_1\}$ the set $\{j \in \N : i_j = i\}$ is
infinite. Considering a suitable subsequence of
$(\delta_j)_{j=1}^\infty$ and using \eqref{eq:taucomparison}, we get
\begin{equation*}
 \liminf_{\delta \downarrow 0}\frac{\log
   S_q\bigl( \mu,B(x_1,\tfrac12),\delta \bigr)}{\log \delta} = \tau_q(\mu),
\end{equation*}
where $x_1 = y_i$.

Next we repeat the above argument by replacing
$\tfrac12$ with $\tfrac14$ and $\spt(\mu)$ by $\spt(\mu)\cap
B(x_1,\tfrac12)$.
Then we find $x_2 \in B(x_1,\tfrac12)$ so that
\begin{equation*}
 \liminf_{\delta \downarrow 0}\frac{\log
   S_q\bigl( \mu,B(x_2,\tfrac14),\delta \bigr)}{\log \delta} = \liminf_{\delta
   \downarrow 0}\frac{\log S_q\bigl( \mu,B(x_1,\tfrac12),\delta \bigr)}{\log
   \delta} = \tau_q(\mu).
\end{equation*}
Continuing inductively, we find a sequence $x_i\in \spt(\mu)$ with
$d(x_{i+1},x_i)\leq 2^{-i}$ and
\begin{equation*}
 \liminf_{\delta \downarrow 0}\frac{\log
   S_q\bigl( \mu,B(x_i,2^{-i}),\delta \bigr)}{\log \delta} = \tau_q(\mu)
\end{equation*}
for every $i\in\N$. Since $\spt(\mu)$ is compact, for $x = \lim_{i\to
  \infty} x_i$, we
eventually get
\begin{equation*}
\liminf_{\delta\downarrow 0}\frac{\log S_q\bigl( \mu,B(x,2^{-i+2}),\delta \bigr)}{\log
  \delta}\leq \liminf_{\delta\downarrow 0}\frac{\log
  S_q\bigl( \mu,B(x_i,2^{-i}),\delta \bigr)}{\log\delta}
\end{equation*}
for all $i \in \N$ and thus $\tau_q(\mu,x)\leq \tau_q(\mu)$.
\end{proof}

\begin{remark}
  If $\mu$ is a measure on a doubling metric space $X$ and $A \subset X$ is compact,
then an easy modification of the above proof shows that for each $q \in \N$ there
exists $x \in A \cap \spt(\mu)$ so that $\tau_q(\mu,x) \le \tau_q(\mu,A)$. Recall also Remark \ref{rem:tau}(4). Then
    $\dim_q(\mu,A) \le \max\{ \dim_q(\mu,x) : x \in A \cap \spt(\mu) \}$
  for $q<1$ and
    $\dim_q(\mu,A) \ge \min\{ \dim_q(\mu,x) : x \in A \cap \spt(\mu) \}$
  for $q>1$.
\end{remark}

\subsection{Local homogeneity and homogeneity dimension}

Let $\mu$ be a measure on $X$, $x \in X$, and $\delta, \eps, r > 0$. Define for all $\Lambda > 1$
\begin{equation} \label{eq:homo_def3}
\begin{split}
   \hom_{\delta,\eps,r}^\Lambda(\mu,x) = \sup\{ \#\BB :\; &\BB \text{ is a $(\delta
r)$-packing of } B(x,r) \\ &\text{so that } \mu(B) > \eps\mu\bigl( B(x,\Lambda r)
\bigr) \text{ for all } B \in \BB \}
\end{split}
\end{equation}
and from this let the \emph{local $\delta$-homogeneity (with a parameter $\Lambda$) of a measure $\mu$ at $x$} be
\begin{equation} \label{eq:homo_def2}
   \hom_\delta^\Lambda(\mu,x) = \lim_{\eps \downarrow 0} \limsup_{r \downarrow 0}
\hom_{\delta,\eps,r}^\Lambda(\mu,x).
\end{equation}
The \emph{local homogeneity dimension (with a parameter $\Lambda$) of a measure $\mu$ at $x$} is then defined as
\begin{equation} \label{eq:homo_def}
   \dim_{\hom}^\Lambda(\mu,x) = \liminf_{\delta \downarrow 0}
\frac{\log\hom_\delta^\Lambda(\mu,x)}{-\log\delta},
\end{equation}
where we interpret $\log{0}=0$ to ensure $\dim_{\hom}^\Lambda(\mu,x)\ge0$.

\begin{remark}\label{homorems}
(1) The limit in \eqref{eq:homo_def2} exists as
$\hom_{\delta,\eps_2,r}^\Lambda(\mu,x) \le \hom_{\delta,\eps_1,r}^\Gamma(\mu,x)$ for all
$0<\eps_1<\eps_2$ and $\Lambda \ge \Gamma > 1$.

(2) The definition of $\dim_{\hom}^\Lambda$ is quite technical. It may be helpful
to compare it to the definition of the Assouad dimension given in Remark \ref{rem:covering}(1).
The local homogeneity dimension may be considered as a kind of local Assouad dimension for the measure $\mu$ around $x$:
it is the least possible exponent $s$ so that for all small $\delta,r>0$ the ball $B(x,r)$ has a
$\delta$-packing of size $\delta^{-s}$ such that the $\mu$ measure of the packing balls is comparable
to $\mu\bigl( B(x,\Lambda r) \bigr)$.

  (3) If $\mu$ is an $s$-regular measure on $X$ with $\spt(\mu)=X$, then
  \begin{equation*}
    \dim_{\hom}^\Lambda(\mu,x) = \dimloc(\mu,x) = s
  \end{equation*}
  for all $x \in X$.
  Indeed, a simple volume argument implies that for all $x\in X$, $r>0$ and $0<\delta<1$,
 we have $c_1\delta^{-s}\le \sup\{\#\BB : \BB \text{ is a $(\delta r)$-packing of } B(x,r) \} \le c_2 \delta^{-s}$.
On the other hand, if $\varepsilon=\varepsilon(\delta)>0$ is small, then we have
$\mu\bigl( B(y,\delta r) \bigr) > \varepsilon \mu\bigl( B(x,r) \bigr)$.

  (4) Let $\mu$ be a measure on $X$.
Then, for every $\mu$-measurable $A \subset X$, we have
  \begin{equation*}
    \dim_{\hom}^\Lambda(\mu|_A,x) = \dim_{\hom}^\Lambda(\mu,x)
  \end{equation*}
  for $\mu$-almost all $x \in A$. This is easily seen by combining Lemma \ref{lemma:weakdpp}
and Lemma \ref{prop:5} below with the estimates
$\mu\bigl( A \cap B(x, 5\Lambda r) \bigr) \ge \varepsilon\mu\bigl( B(x,\Lambda r) \bigr)$
and $\mu(B_i\cap A)\ge\mu(B_i)-\varepsilon\mu\bigl( B(x,5r) \bigr)$ for $B_i\subset B(x,r)$
and $r,\varepsilon>0$ small enough.
\end{remark}

The next lemma shows that at a typical point, the choice of the parameter $\Lambda$ in the
definition of homogeneity does not play any role. Therefore, in the applications, we may
choose a convenient value for $\Lambda$.

\begin{lemma}\label{prop:5}
If $\mu$ is a measure on a doubling metric space $X$ and $\Lambda>\Gamma>1$, then
$\dim_{\hom}^{\Lambda}(\mu,x) = \dim_{\hom}^{\Gamma}(\mu,x)$ for $\mu$-almost every $x \in X$.
\end{lemma}

\begin{proof}
 According to Remark \ref{homorems}(1), we have $\dim_{\hom}^\Lambda(\mu,x) \le \dim_{\hom}^\Gamma(\mu,x)$
 for all $x \in X$. The main point in the proof of
 the opposite inequality is the observation that if $\mathcal{B}$ is a $\delta$-packing of $B(x,r)$,
 then for constants $c_1=c_1(\Lambda,\Gamma)>0$ and $c_2=c_2(N,\Lambda,\Gamma)>0$ there are $y\in B(x,r)$
 and a $\delta$-packing $\mathcal{B}'\subset\mathcal{B}$ of $B(y,cr)$ such that $B(y,\Lambda c_1 r)\subset B(x,\Gamma r)$
 and $\#\mathcal{B}'\ge c_2\#\mathcal{B}$.

  In order to deliver full details of the proof, we assume to the contarary that there exist a set $A \subset X$ with $\mu(A)>0$ and $t>0$ so that
  \begin{equation*}
    \dim_{\hom}^\Lambda(\mu,x) < t < \dim_{\hom}^\Gamma(\mu,x)
  \end{equation*}
  for all $x \in A$. Let $c=(\Gamma-1)/2\Lambda\Gamma^q$ where $q \in \N$ is chosen so that
 $\Gamma^{q-1} \ge 5/(\Gamma-1)$. According to Lemma \ref{thm:covering_thm}(2) there exists
 $M \in \N$ such that a ball of radius $r$ can be covered by $M$ balls of radius $\min\{ c,\Gamma^{-q} \}r$ for all $r>0$.

  Going into a subset of $A$,
  if necessary, we find $r_0,\eps,\delta > 0$ so that $\delta < \Gamma^{-q}$,
  \begin{equation*}
    \hom^\Lambda_{\delta,\eps,r}(\mu,x) < \delta^{-t}/M^2
  \end{equation*}
  for every $0<r<r_0$ and $x \in A$, and
  \begin{equation*}
    \limsup_{r \downarrow 0} \hom^\Gamma_{c\delta,\eps,\Gamma^qr}(\mu,x) > \delta^{-t}
  \end{equation*}
  for all $x \in A$. Recalling Lemma \ref{lemma:weakdpp}, we may also assume that
  \begin{equation}\label{aa}
    \mu\bigl( B(x,\Gamma r) \setminus A \bigr) < \eps\delta^{-t}\mu\bigl( B(x,5\Gamma r) \bigr)/M^2
  \end{equation}
  for all $0<r<r_0$ and $x \in A$.

  Next we fix $x \in A$ and choose $0<r<r_0/\Gamma^qc$ so that
  \begin{equation*}
    \hom^\Gamma_{c\delta,\eps,\Gamma^qr}(\mu,x) > \delta^{-t}.
  \end{equation*}
  Since $A \cap B(x,\Gamma^qr)$ can be covered by $M$ balls of radius $r$ with centers in
$A \cap B(x,\Gamma^qr)$, we find $w \in A \cap B(x,\Gamma^q r)$ and a $(\Gamma^qc\delta r)$-packing
$\BB'$ of $B(w,r)$ so that $\#\BB' \ge \delta^{-t}/M$ and
\begin{equation}\label{bb}
\mu(B) > \eps\mu\bigl( B(x,\Gamma^{q+1} r) \bigr) \ge \eps\mu\bigl( B(w,5\Gamma r) \bigr)
\end{equation}
for all $B \in \BB'$. Covering $B(w,r)$ by $M$ balls of radius $cr$, we see that at least one of
the balls, say $B(y,cr)$, has a $(\Gamma^qc\delta r)$-packing $\BB \subset \BB'$ so that
\begin{equation}\label{cc}
\#\BB \ge \delta^{-t}/M^2.
\end{equation}
Since $B(z,\Lambda\Gamma^qcr) \subset B(w,5\Gamma r)$ for $z\in B(y,2cr)$ we now have
  \begin{equation*}
    \hom^\Lambda_{\delta,\eps,\Gamma^qcr}(\mu,z) > \delta^{-t}/M^2
  \end{equation*}
  for all $z \in B(y,2cr)$, and, consequently, $A \cap B(y,2cr) = \emptyset$. Using \eqref{aa}--\eqref{cc}, we estimate
  \begin{equation*}
    \mu\bigl( B(w,\Gamma r) \setminus A \bigr) < \#\BB \eps\mu\bigl( B(w,5\Gamma r) \bigr) \le \sum_{B \in \BB}\mu(B) \le \mu\bigl( B(y,2cr) \bigr)=\mu\bigl(B(y,2cr)\setminus A \bigr).
  \end{equation*}
Since $B(y,2cr) \subset B(w,\Gamma r)$ we arrive at a contradiction.
\end{proof}

\begin{remark}
In general, the equality of Lemma \ref{prop:5} might not hold at every point $x \in X$ even when $X = \R^2$. To see this take
\begin{equation*}
\mu = \sum_{k=1}^\infty \frac{1}{k!}\HH^1|_{S^1(0,2^{-k})},
\end{equation*}
where $\HH^1|_{S^1(0,2^{-k})}$ is the length measure on $S^1(0,2^{-k}) = \{ y \in \R^2 : |y| = 2^{-k} \}$.
Then we have $\dim_{\hom}^{3/2}\bigl( \mu,(0,0) \bigr) = 1$, but $\dim_{\hom}^{5/2}\bigl( \mu,(0,0) \bigr) = 0$.
\end{remark}

\section{Main results} \label{sec:main_results}

\subsection{Relating $L^q$-dimensions with local dimensions}\label{sec:Lqresults}
The $L^q$-spectrum of a measure is an essential tool in multifractal analysis and it has been
investigated in many works, see e.g.\ \cite{Ngai1997, Heurteaux1998, LauNgai1999, PeresSolomyak2000, BarbarouxGerminetTcheremchantsev2001, FanLauRao2002, Shmerkin2005b}
and \cite{Falconer1990, Falconer1997, Pesin1997} and references therein. It turns out that the well known Hausdorff
and packing dimension estimates for the measure arising from its global $L^q$-spectrum generalise to the setting of
local spectrum in doubling metric spaces. Compare the following result to \cite[Theorem 1.3]{Heurteaux1998},
\cite[Theorem 1.1]{Ngai1997}, and \cite[Theorem 1.4]{FanLauRao2002}. See also \cite[Corollary 1.3]{Olsen2000}.

\begin{theorem} \label{thm:pointwise_dimq}
  If $\mu$ is a measure on a doubling metric space $X$, then
  \begin{equation} \label{eq:local_dimq}
    \lim_{q \downarrow 1} \dim_q(\mu,x) \le \ldimloc(\mu,x) \le \udimloc(\mu,x) \le \lim_{q \uparrow 1} \dim_q(\mu,x)
  \end{equation}
  for $\mu$-almost all $x \in X$.
\end{theorem}

The proof of Theorem \ref{thm:pointwise_dimq} is postponed until the end of this section.
We remark that all the inequalities in \eqref{eq:local_dimq} can be strict; see e.g.\ Remark \ref{rem:strict}.

\begin{lemma}\label{lemma:globloc}
  If $X$ is a doubling metric space, $A\subset X$ bounded, $r>0$, $\mu$ a measure on $X$, $q\in\R$ and $0<\delta<r$,
 then there is an $r$-packing $\BB$ of $A$ so that \[S_q(\mu,A,\delta)\le c\sum_{B \in \BB} S_q(\mu,B,\delta)\,,\]
where $c=c(N)\in\N$.
\end{lemma}

\begin{proof}
Using Lemma \ref{thm:covering_thm}\eqref{covering5}, we choose $r$-packings of $A$, say $\BB_1,\ldots,\BB_M$ where $M=M(N) \in \N$, whose union covers $A$.
Fix $0<\delta<r$ and let $\mathcal{B}'$ be a $\delta$-packing of $A\cap\spt(\mu)$ such that $2\sum_{B\in\mathcal{B}'}\mu(B)^q>S_q(\mu,A,\delta)$. If $\BB'_B = \{ B' \in \BB' : \text{the center point of $B'$ is in $B$} \}$ for all $B \in \bigcup_{i=1}^M \BB_i$, then
\[\sum_{B'\in\mathcal{B}'}\mu(B')^q \le \sum_{i=1}^M\sum_{B \in \BB_i}\sum_{B' \in \BB'_B}\mu(B')^q \le \sum_{i=1}^M\sum_{B \in \BB_i} S_q\bigl( \mu,B,\delta \bigr).\]
Thus $2M \sum_{B \in \BB_i} S_q\bigl( \mu,B,\delta \bigr)\ge S_q(\mu,A,\delta)$ for some $i$.
\end{proof}

\begin{lemma}\label{lemma:taudec}
If $\mu$ is a measure on a doubling metric space $X$, then for any $q\ge 0$ and $\varepsilon>0$,
there is a countable covering of $X$ by bounded sets $A$ for which $\sup_{x\in A}\tau_q(\mu,x) \le \tau_q(\mu,A)+\varepsilon$.
\end{lemma}

\begin{proof}
We may cover $X$ by countably many sets of the form
\[A_\alpha=\{x\in X\,:\, \alpha<\tau_q(\mu,x)<\alpha+\varepsilon\}.\]
If $x\in A_\alpha$, then there exist $r>0$ and $\delta_0>0$ such that $S_q\bigl( \mu,B(x,r),\delta \bigr) < \delta^\alpha$
for all $0<\delta<\delta_0$. Thus, $A_\alpha$ can be covered by countably many sets of the form
\[A_{\alpha,r,\delta_0,R}=\{x\in A_\alpha\cap B(x_0,R) : S_q\bigl( \mu,B(x,r),\delta\bigr) < \delta^\alpha\text{ for all }0<\delta<\delta_0\}.\]
By Lemma \ref{lemma:globloc}, we find an $r$-packing $\mathcal{B}$ of $A_{\alpha,r,\delta_0,R}$ so that
\begin{align*}
\frac{\log S_q(\mu,A_{\alpha,r_0,\delta_0,R},\delta)}{\log\delta} \ge \frac{\log c\sum_{B \in \BB} S_q\bigl(\mu, B,\delta\bigr)}{\log\delta}\ge\frac{\log(\#\BB c\delta^\alpha)}{\log\delta},
\end{align*}
where $c=c(N) \in \N$. Since $\BB$ has at most $M=M(r,R,N) \in \N$ elements by Lemma
\ref{thm:covering_thm}\eqref{covering3}, we get $\tau_q(\mu,A_{\alpha,r,\delta_0,R})\ge\alpha$ by letting $\delta\downarrow 0$.
\end{proof}

The following lemma can be considered as a global version of Theorem \ref{thm:pointwise_dimq}.

\begin{lemma}\label{2-3}
If $\mu$ is a measure on a doubling metric space $X$ and $A\subset X$ is bounded, then
\begin{align*}
\dim_q(\mu,A) &\le \mu\text{-}\essinf\{ \ldimloc(\mu,x) : x \in A \} \\
&\le \mu\text{-}\esssup\{ \udimloc(\mu,x) : x \in A \} \le \dim_p(\mu,A)
\end{align*}
for all $0<p<1<q$.
\end{lemma}

\begin{proof}
Let $q>1$. If $s > \mu\text{-}\essinf\{ \ldimloc(\mu,x) : x \in A \}$ and $A_n = \{ x \in A \cap \spt(\mu) : \mu\bigl( B(x,2^{-n}) \bigr) > 2^{-ns} \}$, then $\sum_{n=1}^\infty\mu(A_n)=\infty$ by the Borel-Cantelli Lemma. Thus, there are arbitrarily large $n$ such that $\mu(A_n)>n^{-2}$. Fix such an $n$ and let $\BB$ be a $(2^{-n})$-packing of $A_n$ satisfying \eqref{packingestimate}. Then
\begin{align*}\label{Asum}
S_{q}(\mu,A,2^{-n})&\ge\sum_{B \in \BB}\mu(B)^q=\sum_{B \in \BB}\mu(B)\mu(B)^{q-1} \\ &\ge\sum_{B \in \BB}\mu(B)2^{-ns(q-1)}
\ge c\mu(A_n) 2^{-ns(q-1)}\ge c n^{-2} 2^{-ns(q-1)}.
\end{align*}
Taking logarithms and letting $n\rightarrow\infty$, this implies $\tau_q(\mu,A) \le s(q-1)$ and,
consequently, $\dim_q(\mu,A)\le s$ as required.

If $0<p<1$, then we complete the proof by repeating the above argument with $q$ replaced by $p$,
$s < \mu\text{-}\esssup\{ \udimloc(\mu,x) : x \in A \}$, and $A_n = \{ x \in A \cap \spt(\mu) : \mu\bigl( B(x,2^{-n}) \bigr) < 2^{-ns} \}$.
\end{proof}

\begin{proof}[Proof of Theorem \ref{thm:pointwise_dimq}]
The proof follows simply by combining Lemmas \ref{lemma:taudec} and \ref{2-3}. Indeed,
for $q>1$ and $\varepsilon>0$, decompose $X$ into countably many bounded sets $A$ for
which $\sup_{x\in A}\tau_q(\mu,x) \le \tau_q(\mu,A)+\varepsilon$. Lemma \ref{2-3} then implies that
\begin{equation*}
\dim_q(\mu,x)-\varepsilon/(q-1) \le \dim_q(\mu,A) \le \ldimloc(\mu,x)
\end{equation*}
for $\mu$-almost all $x\in A$. The leftmost inequality of \eqref{eq:local_dimq} follows
now by letting $\eps \downarrow 0$. Recall that the limit exists by Lemma \ref{tauprop}(5).
The proof in the case $0<q<1$ is similar.
\end{proof}

\subsection{Upper bound by local homogeneity dimension} \label{sec:local_homo}
In this section, we prove our main result showing that the local homogeneity dimension
is almost everywhere at least as large as the upper local dimension.

\begin{theorem} \label{thm:homodim}
   If $\mu$ is a measure on a doubling metric space $X$ and $\Lambda > 1$, then
   \begin{equation*}
     \udimloc(\mu,x) \le \dim_{\hom}^\Lambda(\mu,x)
   \end{equation*}
   for $\mu$-almost all $x \in X$.
\end{theorem}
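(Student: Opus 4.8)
The plan is to show that if $\udimloc(\mu,x) > s$ on a set $A$ of positive measure, then $\dimhomo(\mu,x) \ge s$ on a subset of $A$ of positive measure, which by letting $s$ run through a countable sequence gives the claim for $\mu$-almost every $x$. More precisely, fix rational $t < s$; it suffices to prove that the set $E_{s} = \{ x : \udimloc(\mu,x) \ge s \}$ satisfies $\dimhomo(\mu,x) \ge t$ for $\mu$-a.e.\ $x \in E_s$, and then intersect over countably many pairs $t<s$. So assume for contradiction that $F = \{ x \in E_s : \dimhomo(\mu,x) < t \}$ has $\mu(F) > 0$. By definition of $\dimhomo$ via a $\liminf$ over $\delta$, for each $x \in F$ there are arbitrarily small $\delta$ with $\hom_\delta(\mu,x) \le \delta^{-t}$, and hence (using \eqref{eq:homo_def2}) for each such $\delta$ and each small $\eps$, one has $\hom_{\delta,\eps,r}(\mu,x) \le \delta^{-t}$ for all sufficiently small $r$.

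The heart of the argument is a covering/counting estimate at a single scale. Fix $x \in F$ and a good pair $(\delta, \eps)$, and take $r$ small enough that the homogeneity bound holds at scales $\le r$. Take a maximal $(\delta r)$-packing $\BB$ of $B(x,r)$; then $2\BB$ covers $B(x,r)$, and by Lemma \ref{thm:covering_thm}(1) we can split $\BB$ into $M = M(N)$ subfamilies each of which is a genuine $(\delta r)$-packing of $B(x,r)$, so the homogeneity bound applies to each. The balls $B \in \BB$ with $\mu(B) \le \eps \mu(B(x,5r))$ contribute at most $M \cdot \#\{\text{such balls}\}\cdot \eps \mu(B(x,5r))$ — but we instead throw these "light" balls away: their total $\mu$-mass, over a covering $2\BB$ whose multiplicity is controlled, is at most (number of balls)$\cdot \eps\mu(B(x,5r))$, and the number of balls in a $(\delta r)$-packing of $B(x,r)$ is at most $C(N,\delta^{-1}) = \delta^{-\log_2 N}$ by Lemma \ref{thm:covering_thm}(2). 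So the light balls cover a set of $\mu$-mass at most $\delta^{-\log_2 N}\eps \mu(B(x,5r))$, which is a small fraction of $\mu(B(x,r))$ once $\eps$ is small relative to $\delta$ (here one also needs $\mu(B(x,r))$ not too much smaller than $\mu(B(x,5r))$ — this is where a density/doubling-type reduction enters, restricting to a subset of $F$ of positive measure where $\mu(B(x,5r)) \le C \mu(B(x,r))$ along a sequence of scales, available by a standard argument). Consequently the "heavy" balls, of which there are at most $M\delta^{-t}$ by the homogeneity bound applied to each of the $M$ subfamilies, cover a definite fraction, say at least $\tfrac12$, of $\mu(B(x,r))$. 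Iterating this one scale at a time — replace $B(x,r)$ by one of the heavy child balls of comparable mass and repeat — we find that after $n$ steps $\mu(B(x,\delta^n r))$ is, for a suitable chain, at least $(2M\delta^{-t})^{-n}\mu(B(x,r))$, which forces $\ldimloc(\mu,x) \le$ roughly $t + \log(2M)/(-\log\delta)$, and by choosing $\delta$ small, $< s$, contradicting $x \in E_s \subset \{\udimloc \ge s\}$ — wait: we only control $\ldimloc$ this way, so one must instead argue that this lower bound on ball mass holds along enough scales to bound $\udimloc$. The correct route, matching the quantitative Theorem \ref{thm:main}, is to run the iteration and observe it produces a point $y$ (a limit of the chain of heavy balls) with $\ldimloc(\mu,y)$ small, and to arrange via a measure-theoretic selection (a Vitali-type or martingale argument on the density of $F$) that such $y$ can be taken in $F \cap E_s$, giving the contradiction.

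The main obstacle I expect is precisely this last measure-theoretic step: converting the one-scale counting estimate, which is pointwise and non-hierarchical (the $(\delta r)$-packings at different scales need not nest), into a statement about $\mu$-almost every point. In Euclidean space with dyadic cubes this is the standard mass-distribution / Besicovitch argument, but here one must work with the $\delta$-partitions and recursively defined packings introduced in \S \ref{sec:notation}, and control the $5$-dilation in the definition of $\hom_{\delta,\eps,r}$ against the $\Lambda$-dilation of partitions. I would handle this by passing to a $\delta$-partition refinement, defining on $F$ a nested sequence of partitions along scales $\delta^n$, pushing the one-scale estimate through each refinement step to get that the $\mu$-mass of a "bad" (low-homogeneity-at-all-scales) set decays, and invoking Borel–Cantelli / the density point reduction to conclude $\mu(F) = 0$. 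The quantitative bookkeeping of constants ($M(N)$, $C(N,\delta^{-1})$, the $\eps$ versus $\delta$ comparison, and the number of scales needed) is routine once the iteration is set up, and is exactly what Theorem \ref{thm:main} packages.
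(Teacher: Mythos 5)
Your overall strategy (contradiction at a fixed pair $t<s$, one-scale heavy/light dichotomy via the homogeneity count, then a multi-scale aggregation) points in the right direction, but the proposal has a genuine gap at both of its two main steps. First, the one-scale estimate as you set it up does not close: the light balls are discarded on the grounds that their total mass $\lesssim \delta^{-\log_2 N}\eps\,\mu\bigl(B(x,5r)\bigr)$ is a small fraction of $\mu\bigl(B(x,r)\bigr)$, which requires $\mu\bigl(B(x,5r)\bigr)\le C\mu\bigl(B(x,r)\bigr)$; for a general Radon measure this comparability holds $\mu$-a.e.\ only along \emph{some} sequence of radii (and with a constant forced on you), not at all scales, and in your iteration it would be needed at every new centre and every new scale. (There is also a smaller mismatch: the covering family is $2\BB$ while the heaviness condition in \eqref{eq:homo_def3} concerns the packing balls themselves, so $\mu(B)\le\eps\mu\bigl(B(x,5r)\bigr)$ does not control $\mu(2B)$; this is fixable but symptomatic.) Second, the chain iteration cannot be continued as described: the homogeneity bound is a hypothesis only at points of $F$, and the centre of the chosen heavy child ball need not lie in $F$, nor need the mass comparison hold there at that scale. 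Incidentally, your self-diagnosis is slightly off: since the chain scales are geometrically spaced, a chain with mass retention at \emph{every} step would in fact bound $\udimloc$ of the limit point, not merely $\ldimloc$; the real obstruction is that the chain cannot be built, and your proposed remedy (``Vitali-type or martingale argument on the density of $F$'', nested partitions plus Borel--Cantelli) is exactly the missing content and is not supplied.

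The paper resolves these issues by a different mechanism. Theorem \ref{thm:main} never iterates masses along a pointwise chain; instead it proves, via H\"older's inequality, an inequality \eqref{eq:lemma_assumption} between sums $\sum_{B\in\BB}\mu(B)^{1-t/s}$ over packings of the whole bad set $A$ at consecutive scales, where the heavy balls are counted by the homogeneity assumption and the light balls are counted by doubling (Lemma \ref{thm:covering_thm}) and absorbed through the choice of $\eps_0$ --- no comparison of $\mu\bigl(B(x,r)\bigr)$ with $\mu\bigl(B(x,5r)\bigr)$ is ever needed at this stage. The conversion to an almost-everywhere statement is then Lemma \ref{lemma:pack}: iterating \eqref{eq:lemma_assumption} gives geometric decay of these $L^{1-t/s}$-type sums, and Lemma \ref{thm:homolemma} (which extracts from $\udimloc(\mu,x)>s$ arbitrarily small radii with $\mu\bigl(B(x,5r)\bigr)\le(\delta r)^t\mu\bigl(B(x,\delta r)\bigr)^{1-t/s}$) together with the $5r$-covering theorem turns that decay into $\mu(A)=0$. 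This is precisely the step your sketch leaves open, and it is where the hypothesis on the \emph{upper} local dimension is genuinely used; without an argument of this type (or an equivalent substitute) the proposal does not constitute a proof.
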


Theorem \ref{thm:homodim} is obtained as a corollary to a more
quantitative result, Theorem \ref{thm:main}, which will be essential
in our applications in \S \ref{sec:applications}.
Before we turn to Theorem \ref{thm:main}, we exhibit some auxiliary results.
We first observe that the homogeneity can be used to bound the $L^q$-moment sums.

\begin{lemma}\label{lemma:homtoS}
If $X$ is a doubling metric space and $0<\delta <1$, then there is $M=M(\delta,N) \in \N$ so that for every measure
 $\mu$ on $X$ and for all $\Lambda > 1$, $0<q<1$, $x \in X$, and $r,\eps > 0$ we have
\[S_{q}\bigl( \mu,B(x,r),\delta r \bigr) \le \bigl( \hom_{\delta,\varepsilon,r}^\Lambda(\mu,x)^{1-q}+ M\varepsilon^q \bigr)\mu\bigl( B(x,\Lambda r) \bigr)^q.\]
\end{lemma}

\begin{proof}
If $\mathcal{B}$ is a $(\delta r)$-packing of $B(x,r)$ and $\mathcal{B}' = \{ B\in\mathcal{B} : \mu(B)>\varepsilon\mu\bigl( B(x,\Lambda r) \bigr) \}$,
then H\"older's inequality implies
\begin{equation*}
\sum_{B\in\mathcal{B}'}\mu(B)^q \le \hom_{\delta,\varepsilon,r}^\Lambda(\mu,x)^{1-q}\mu\bigl( B(x,\Lambda r) \bigr)^q.
\end{equation*}
On the other hand, since
\begin{equation*}
\sum_{B\in\mathcal{B}\setminus\mathcal{B}'} \mu(B)^q \le \#\mathcal{B}\varepsilon^q\mu\bigl( B(x,\Lambda r) \bigr)^q
\end{equation*}
and $\#\mathcal{B}\le M(\delta,N)$ by Lemma \ref{thm:covering_thm}\eqref{covering3}, the claim follows.
\end{proof}

\begin{lemma}\label{lemma:joku}
If $X$ is a doubling metric space, $0<q,\delta<1$, and $m>0$, then there exists a constant
$\varepsilon=\varepsilon(q,\delta,m,N)>0$ satisfying the following: If $\mu$ is a measure on $X$, $\Lambda>1$,
and $A\subset X$ is bounded so that $\hom_{\delta,\varepsilon,r}^\Lambda(\mu,x)\le\delta^{-m}$ for all $x\in A$
and $0<r<r_0$, then there is a constant $c=c(N,\Lambda) \ge 1$ so that
\[S_q(\mu,A,\delta r)\le c\delta^{m(q-1)}S_q(\mu,A,\Lambda r)\]
for all $0<r<r_0$.
\end{lemma}

\begin{proof}
Let $\varepsilon>0$ be so small that $M\varepsilon^q \le \delta^{m(q-1)}$, where $M$ is as in Lemma \ref{lemma:homtoS}.
According to Lemma \ref{lemma:globloc}, there are $c=c(N)\in\N$ and an $r$-packing $\BB$ of $A$ so that
\begin{equation*}
  S_q(\mu,A,\delta r) \le c\sum_{B \in \BB} S_q(\mu,B,\delta r)
  \le c\sum_{B \in \BB} 2\delta^{m(q-1)} \mu(\Lambda B)^q
\end{equation*}
by Lemma \ref{lemma:homtoS}, the homogeneity assumption and the choice of $\eps$.
The claim now follows since $\sum_{B\in\mathcal{B}}\mu(\Lambda B)^q\le c(N,\Lambda) S_q(\mu,A,\Lambda r)$
by Lemma \ref{thm:covering_thm}\eqref{covering4}.
\end{proof}

The following theorem is our main quantitative result concerning
local homogeneity of measures.

\begin{theorem}\label{thm:main}
  If $X$ is a doubling metric space, $0 < m < s$, and $\Lambda>1$, then there exists a constant
$\delta_0 = \delta_0(m,s,N,\Lambda) > 0$ satisfying the following: For every $0 < \delta < \delta_0$ there is
$\eps_0 = \eps_0(\delta,m,N) > 0$ so that for each
  measure $\mu$ on $X$ we have
\begin{equation}\label{hommu}
\limsup_{r \downarrow 0}
  \hom_{\delta,\eps,r}^\Lambda(\mu,x) \geq \delta^{-m}
\end{equation}
for all $0 < \eps \leq \eps_0$ and for $\mu$-almost all $x \in X$ that satisfy $\udimloc(\mu,x) > s$.
\end{theorem}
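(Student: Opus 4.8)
The plan is to argue by contradiction, reducing to Lemma~\ref{lemma:pack}. If \eqref{hommu} fails, then for some $0<\eps\le\eps_0$ there is a Borel set $A$ with $\mu(A)>0$ such that $\udimloc(\mu,x)>s$ and $\limsup_{r\downarrow0}\hom_{\delta,\eps,r}(\mu,x)<\delta^{-m}$ for all $x\in A$. Restricting $\mu$ to a large ball and replacing $A$ by a Borel subset, we may assume that $\mu$ has bounded support, that $A\subset\spt(\mu)$, and that there is $r_0>0$ with $\hom_{\delta,\eps,r}(\mu,x)<\delta^{-m}$ for all $x\in A$ and $0<r<r_0$; by Lemma~\ref{thm:space_dim} we may also assume $\udimloc(\mu,\cdot)\le\log_2 N$ on $A$. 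Fix $t$ with $m<t<s$ and put $\alpha=1-t/s\in(0,1)$, so $1-\alpha=t/s$. I want to verify the hypothesis of Lemma~\ref{lemma:pack} for this $\delta$, this $t$, and some $k_0$ with $\delta^{k_0-1}<r_0$; Lemma~\ref{lemma:pack} then yields $\mu(\{x\in A:\udimloc(\mu,x)\le s\})>0$, contradicting $\udimloc(\mu,\cdot)>s$ on $A$.

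The main work is the \emph{merging step}: given $k\ge k_0$ and a $\delta^k$-packing $\BB=\{B(x_i,\delta^k)\}_i$ of $A$, construct a $\delta^{k-1}$-packing $\BB'$ of $A$ with $\delta^t\sum_{B\in\BB}\mu(B)^\alpha\le\tfrac12\sum_{B\in\BB'}\mu(B)^\alpha$. First choose a maximal subset $\{x_{i'}\}_{i'\in J}$ of the centres with $d(x_{i'},x_{j'})\ge(1-\delta)\delta^{k-1}$ for distinct $i',j'\in J$, and assign each $i$ to some $i'(i)\in J$ with $d(x_i,x_{i'(i)})<(1-\delta)\delta^{k-1}$; then $B(x_i,\delta^k)\subset B(x_{i'(i)},\delta^{k-1})$, the fibres $I_{i'}=\{i:i'(i)=i'\}$ partition the index set, each $I_{i'}$ has at most $\delta^{-\log_2 N}$ elements by Lemma~\ref{thm:covering_thm}(2), and by Lemma~\ref{thm:covering_thm}(1) with $\lambda=(1-\delta)/3$ the balls $\{B(x_{i'},\delta^{k-1})\}_{i'\in J}$ decompose into $M=M(N,\delta)$ many $\delta^{k-1}$-packings of $A$. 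After pigeonholing over these $M$ sub-packings it suffices to bound $\sum_{i\in I_{i'}}\mu(B(x_i,\delta^k))^\alpha$ by a fixed multiple of $\mu(B(x_{i'},\delta^{k-1}))^\alpha$ for each $i'$. Now $\{B(x_i,\delta^k)\}_{i\in I_{i'}}$ is a $(\delta\cdot\delta^{k-1})$-packing of $B(x_{i'},\delta^{k-1})$ with $x_{i'}\in A$ and $\delta^{k-1}<r_0$, so $\hom_{\delta,\eps,\delta^{k-1}}(\mu,x_{i'})<\delta^{-m}$: fewer than $\delta^{-m}$ of these balls have mass exceeding $\eps\mu(B(x_{i'},5\delta^{k-1}))$. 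Since the balls are disjoint and contained in $B(x_{i'},\delta^{k-1})$ we have $\sum_{i\in I_{i'}}\mu(B(x_i,\delta^k))\le\mu(B(x_{i'},\delta^{k-1}))$ and each has mass at most $\mu(B(x_{i'},\delta^{k-1}))$; bounding the fewer than $\delta^{-m}$ ``heavy'' balls by $\mu(B(x_{i'},\delta^{k-1}))^\alpha$ each, and the ``light'' ones by their common mass bound, one obtains, at fibre centres where $\mu(B(x_{i'},5\delta^{k-1}))\le\eps^{-1/2}\mu(B(x_{i'},\delta^{k-1}))$, a bound of the shape $\bigl(\delta^{-m}+\delta^{-\log_2 N}\eps^{\alpha/2}\bigr)\mu(B(x_{i'},\delta^{k-1}))^\alpha$. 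Summing over $i'$, the decisive gain is $\delta^{t}\cdot\delta^{-m}=\delta^{t-m}\to0$ as $\delta\downarrow0$: choose $\delta_0$ small enough to beat $M$ (which stays bounded as $\delta\downarrow0$), and then $\eps_0$ small enough to absorb the $\eps^{\alpha/2}$-term. This is where the quantifications $\delta_0=\delta_0(m,s,N)$ and $\eps_0=\eps_0(m,s,N,\delta)$ originate.

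The main obstacle is exactly the proviso used above. The definition of $\hom_{\delta,\eps,r}$ compares masses to $\mu(B(x,5r))$, not to $\mu(B(x,r))$, and since $\mu$ need not be doubling the ratio $\mu(B(x_{i'},5\delta^{k-1}))/\mu(B(x_{i'},\delta^{k-1}))$ can be arbitrarily large; at such ``bad'' fibre centres no ball of the fibre is heavy, the homogeneity bound is vacuous, and only the crude Hölder estimate $\sum_{i\in I_{i'}}\mu(B(x_i,\delta^k))^\alpha\le(\delta^{-\log_2 N})^{1-\alpha}\mu(B(x_{i'},\delta^{k-1}))^\alpha$ is available, which is useless because $\delta^{t}\cdot\delta^{-(\log_2 N)(1-\alpha)}=\delta^{t(1-\log_2 N/s)}$ does not tend to $0$ when $s\le\log_2 N$. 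The remedy is that a density blow-up $\mu(B(x,5r))>\eps^{-1}\mu(B(x,r))$ can occur only on a sparse set of scales $r=\delta^{k-1}$: telescoping $r\mapsto\mu(B(x,r))$ and invoking $\udimloc(\mu,x)\le\log_2 N$ for $\mu$-a.e.\ $x$ (Lemma~\ref{thm:space_dim}) shows that, $\mu$-almost everywhere, the proportion of such scales is $O\bigl(\log(1/\delta)/\log(1/\eps)\bigr)$, which is small for small $\eps$. For this reason one does not run the merging step against arbitrary packings but inserts it into the proof scheme of Lemma~\ref{lemma:pack}, where the relevant packings are those extracted from a $5r$-covering of the balls on which \eqref{eq:small_radii} holds; choosing for each centre a scale at which \eqref{eq:small_radii} and the density bound hold simultaneously makes the bad fibres negligible once $\eps_0$ is small, and then, exactly as in the proof of Lemma~\ref{lemma:pack}, iterating forces $\mu(A)=0$, the desired contradiction.
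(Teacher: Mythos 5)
Your reduction to Lemma~\ref{lemma:pack} and your heavy/light estimate at the ``good'' fibres are sound and close in spirit to the intended argument, but the treatment of the bad fibres --- which you yourself single out as the main obstacle --- is a genuine gap, and the remedy you sketch does not close it. Lemma~\ref{lemma:pack} requires \eqref{eq:lemma_assumption} for \emph{every} $\delta^k$-packing of $A$ at every level $k\ge k_0$, so once you concede that your merging step fails for some packings you cannot invoke it as a black box; and if you instead re-run its proof, the iteration yielding \eqref{eq:repeated_upper_bound} applies the merging step successively at all intermediate levels $k-1,k-2,\dots,k_0$, where the fibre centres are whatever points of $A$ the maximal-separation construction produces and the scales are the whole geometric ladder. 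Your sparse-bad-scales observation is a statement about a \emph{fixed point across scales}; what the iteration needs is control, at each \emph{fixed intermediate scale}, of the particular centres the construction hands you, and nothing prevents those centres from being bad at exactly that scale. Choosing, at the bottom of the Vitali extraction, radii at which \eqref{eq:small_radii} and a density bound hold simultaneously constrains only the finest scale and does nothing at the coarser levels where the bad fibres arise. Nor does smallness of the bad set help: at a fixed scale the set of $\eps^{-1}$-bad points has small $\mu$-measure by a covering argument, but what must be controlled is the share of $\sum_{B}\mu(B)^{1-t/s}$ carried by bad fibres, and for the concave exponent $1-t/s$ a family of tiny total mass consisting of very many light balls can dominate that sum; as you note, the crude H\"older bound then loses $\delta^{-(\log_2 N)t/s}$, which $\delta^{t}$ cannot beat when $s\le\log_2 N$ (the only relevant case). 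So the decisive claim ``makes the bad fibres negligible once $\eps_0$ is small'' is asserted, not proved.

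The paper's proof avoids any ratio $\mu\bigl(B(x,5r)\bigr)/\mu\bigl(B(x,r)\bigr)$ in the merging step by a different geometry: it works at base scale $\delta/5$, takes $\BB$ a $(\delta/5)^k$-packing of $A$, a maximal $\bigl((\delta/5)^{k-1}/10\bigr)$-packing $\BB_0$ of the centres, fibres through the balls $2\BB_0$, and --- crucially --- takes the output packing $\BB'$ to be a subfamily of $10\BB_0$, so that the comparison ball $5B$ (for $B\in2\BB_0$, of radius $(\delta/5)^{k-1}$) \emph{is} the next-level packing element centred at the same point. Then both the heavy-ball bound \eqref{eq:holder} and the light-ball bound \eqref{eq:not_holder} (the latter absorbed by the choice of $\eps_0$ together with the cardinality bound $N^l$) are stated directly in terms of $\mu(5B)^{1-t/s}$, Lemma~\ref{thm:covering_thm}(1) splits $10\BB_0$ into $M$ genuine packings, and Lemma~\ref{lemma:pack} is applied with $\delta/5$ in place of $\delta$; the only place where radii ratios at a fixed centre are handled is Lemma~\ref{thm:homolemma}, whose proof already contains your sparse-scale idea. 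If you enlarge your output balls so that the homogeneity comparison ball and the next-level ball coincide, your argument goes through with no bad-fibre case at all.
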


\begin{proof}
Fix $0<q<1$. Let $\delta_0>0$ be so small that $\log (c\Lambda^{m(q-1)})/\log((\delta_0/\Lambda)^{q-1}) > (s-m)(q-1)$,
where $c=c(N,\Lambda)>0$ is as in Lemma \ref{lemma:joku}. Fix $0<\delta<\delta_0$ and let $\eps = \eps(q,\delta,m,N) > 0$
be as in Lemma \ref{lemma:joku}. Given $x_0 \in X$ and $R,r_0 > 0$, it suffices to show that $\udimloc(\mu,x) \le s$ for
$\mu$-almost every point in the set
\begin{equation*}
  A = \{x\in B(x_0,R) : \hom_{\delta,\varepsilon,r}^\Lambda(\mu,x) < \delta^{-m} \text{ for all } 0<r<r_0\}.
\end{equation*}
According to Lemma \ref{lemma:joku}, we have $S_q(\mu,A,\delta r/\Lambda) \le c\delta^{m(q-1)} S_q(\mu,A,r)$
for all $0<r<r_0$. A simple induction gives $S_q\bigl( \mu,A,(\delta/\Lambda)^nr_0 \bigr) \le c^n\delta^{nm(q-1)} S_q(\mu,A,r_0)$
for all $n \in \N$. Therefore
\begin{equation*}
  \tau_q(\mu,A) = \liminf_{n \to \infty} \frac{\log S_q\bigl( \mu,A,(\delta/\Lambda)^nr_0 \bigr)}{\log\bigl( (\delta/\Lambda)^nr_0 \bigr)} \ge m(q-1) + \frac{\log (c\Lambda^{m(q-1)})}{\log(\delta/\Lambda)}
\end{equation*}
and so $\dim_q(\mu,A) \le s$ by the choice of $\delta_0$. Lemma \ref{2-3} then gives
 $\udimloc(\mu,x)\le s$ at $\mu$-almost all points $x\in A$.
\end{proof}

\begin{remark}
In general, it is possible that $\dim_q(\mu,x)>c>0$ for all $0<q<1$ almost everywhere
while $\dim_{\hom}(\mu,x)=0$; see Example \ref{ex:q>h}. It is essential in the proof of
Theorem \ref{thm:main} that in the set $A$, where we have uniform estimates for
$\hom_{\delta,\varepsilon,r}^\Lambda(\mu,x)$, we can use $\dim_{\hom}(\mu,x)$ to bound $\dim_q(\mu,A)$ from above.
\end{remark}

\begin{proof}[Proof of Theorem \ref{thm:homodim}]
  Assume to the contrary that there are $A\subset X$ with $\mu(A)>0$ and $0<m<s$
such that $\dim_{\hom}^\Lambda(\mu,x)<m<s<\udimloc(\mu,x)$ for all $x \in A$. It
follows from Theorem \ref{thm:main} that there is $\delta_0=\delta_0(m,s,N,\Lambda)>0$
so that $\hom^{\Lambda}_\delta(\mu,x) \ge \delta^{-m}$ for every $0<\delta<\delta_0$
and for $\mu$-almost all $x \in A$. Thus $\dim_{\hom}^\Lambda(\mu,x)\geq m$ for $\mu$-almost
all $x\in A$ giving a contradiction.
\end{proof}

\subsection{Entropy dimension}\label{sec:entropydim}

We complete the discussion on $\dim_q$ by treating the case $q=1$.
This is done by defining for $A\subset X$ with $\mu(A)>0$ the \emph{(global) upper and lower entropy dimensions} of $\mu$ on $A$ as
\begin{equation*}
\begin{split}
  \udim_1(\mu,A) &= \limsup_{\delta \downarrow 0} \fint_{A} \frac{\log\mu\bigl( B(y,\delta) \bigr)}{\log\delta}\,d\mu(y), \\
  \ldim_1(\mu,A) &= \liminf_{\delta \downarrow 0} \fint_{A} \frac{\log\mu\bigl( B(y,\delta) \bigr)}{\log\delta}\,d\mu(y),
\end{split}
\end{equation*}
respectively. If they agree, then their common value is denoted by $\dim_1(\mu,A)$.
Here and hereafter, for $A\subset X$ and a $\mu$-measurable $f\colon
X\rightarrow\overline{\R}$, we use notation $\fint_A f(y)\,d\mu(y) =
\mu(A)^{-1} \int_A f(y)\,d\mu(y)$ whenever the integral is well
defined. The \emph{local upper and lower entropy dimensions at $x\in\spt(\mu)$} are then defined as
\begin{equation} \label{eq:dim1}
\begin{split}
  \udim_1(\mu,x) &= \limsup_{r \downarrow 0} \udim_1\bigl( \mu,B(x,r) \bigr),\\
  \ldim_1(\mu,x) &= \liminf_{r \downarrow 0} \ldim_1\bigl( \mu, B(x,r) \bigr).
\end{split}
\end{equation}

Our results on $\dim_1(\mu,x)$ are local metric space versions of the corresponding
global Euclidean results. For instance, see \cite[Theorem 4.1]{Heurteaux1998} and \cite[Theorem 1.4]{FanLauRao2002}.
The case $q=1$ is different from $q\neq 1$ in the sense that it cannot be studied
solely by using Borel-Cantelli type arguments. Also, in the main result of this section,
Theorem \ref{thm:entropy}, the density point property is a crucial assumption and it cannot
be replaced by the weaker and more general condition given by Lemma \ref{lemma:weakdpp} as
is the case for $q\neq 1$.

The following proposition shows that the definition of $\dim_1$ is consistent with the basic properties of $\dim_q$.

\begin{proposition}\label{prop:dim1}
If $\mu$ is a measure on a doubling metric space $X$, then
\begin{equation*}
\lim_{q \downarrow 1} \dim_q(\mu,x) \le \ldim_1(\mu,x) \le \udim_1(\mu,x) \le \lim_{q \uparrow 1} \dim_q(\mu,x)
\end{equation*}
for all $x\in\spt(\mu)$.
\end{proposition}

The proof of the proposition involves the partition definition of $\dim_q$.
Since we do not need the result in this article, we will omit the proof.
A detailed proof can be found in \cite{KaenmakiRajalaSuomala2012b}.

\begin{theorem}\label{thm:entropy}
If $\mu$ is a measure on a doubling metric space $X$ so that it satisfies the density point property, then
  \begin{equation} \label{eq:local_dim1}
    \ldimloc(\mu,x) \le \ldim_1(\mu,x) \le \udim_1(\mu,x) \le \udimloc(\mu,x)
  \end{equation}
  for $\mu$-almost all $x \in X$.
\end{theorem}

\begin{proof}
We may assume that the measure is non-atomic as the claim is obvious if $\mu(\{x\})>0$.
Given $\varepsilon>0$, we may cover $\mu$-almost all of $X$
by countably many sets of the form
$A'=\{y\in X : t<\ldimloc(\mu,y)<t+\varepsilon\}$ and each of these can be covered by countably many sets of the form
$A=\{y\in A' : \mu\bigl( B(x,r) \bigr) < r^t \text{ for all } 0<r<q\}$. For  $x\in\spt(\mu)$ and $0<\delta<q$, we have
\begin{align*}
\fint_{B(x,r)}\frac{\log\mu\bigl(B(y,\delta)\bigr)}{\log\delta}\,d\mu(y)&\ge\frac1{\mu\bigl(B(x,r)\bigr)}\int_{A\cap B(x,r)}\frac{\log\mu\bigl(B(y,\delta)\bigr)}{\log\delta}\,d\mu(y)\\
&\ge t\frac{\mu\bigl(A \cap B(x,r)\bigr)}{\mu\bigl(B(x,r)\bigr)}.
\end{align*}
Here we have assumed that $\mu\bigl(B(x,r+\delta)\bigr)<1$. For small $r$ and $\delta$ this is the case since $\mu$ has no atoms. Since almost all points $x\in A$ are density points, we get
\[\ldim_1(\mu,x)=\liminf_{r\downarrow 0}\liminf_{\delta\downarrow 0}\fint_{B(x,r)}\frac{\log\mu\bigl(B(y,\delta)\bigr)}{\log\delta}\,d\mu(y) \ge t\]
for $\mu$-almost all $x\in A$
and consequently $\ldim_1(\mu,x)\ge\ldimloc(\mu,x)-\varepsilon$ for $\mu$-almost all $x\in X$.

To prove the estimates for the upper dimension, a similar covering argument as above implies that it suffices to show that if $0<q,t<\infty$, then $\udim_1(\mu,x)\le t$ for $\mu$-almost all $x\in A = \{y\in X : \mu\bigl( B(y,r) \bigr) > r^t \text{ for all } 0<r<q\}$. Let $x\in X$ and $0<r<q$. For $0<\delta<q$ and $t\le\alpha<\infty$, define $E_{\delta,\alpha}=\{y\in B(x,r) : \mu\bigl( B(y,\delta) \bigr) < \delta^\alpha\}$. By Lemma \ref{thm:covering_thm}\eqref{covering2}, $E_{\delta,\alpha}$ can be covered by $C\delta^{-s}$ balls of radius $\delta$ with centres in $E_{\delta,\alpha}$, where $s=s(N)>0$ and $C=C(N,r)>0$. Thus $\mu(E_{\delta,\alpha}) \le C\delta^{\alpha-s}$.
Let $J = \{ y \in B(x,r) : \mu\bigl( B(y,\delta) \bigr) > \delta^t \}$, $K = \{ y \in B(x,r) : \delta^{2s} \le \mu\bigl( B(y,\delta) \bigr) \le \delta^{t} \}$, and $L = \{ y \in B(x,r) : \mu\bigl( B(y,\delta) \bigr) < \delta^{2s} \}$. Then $B(x,r) = J \cup K \cup L$. Moreover,
\begin{align*}
\int_{J}\frac{\log\mu\bigl( B(y,\delta) \bigr)}{\log\delta} \,d\mu(y) &\le t\mu(J) \le t\mu\bigl( B(x,r) \bigr), \\
\int_{K}\frac{\log\mu\bigl( B(y,\delta) \bigr)}{\log\delta} \,d\mu(y) &\le 2s\mu(K) \le 2s\mu\bigl( B(x,r)\setminus A \bigr), \\
\int_{L}\frac{\log\mu\bigl( B(y,\delta) \bigr)}{\log\delta} \,d\mu(y) &= \int_{2s}^\infty \mu(E_{\delta,\alpha}) \,d\alpha \le C\delta^{-s} \int_{2s}^\infty \delta^{\alpha}\,d\alpha = \frac{C\delta^s}{-\log\delta}.
\end{align*}
Putting these together and letting $\delta\downarrow 0$ in the last estimate, we get
\[\udim_1\bigl( \mu,B(x,r) \bigr) = \limsup_{\delta\downarrow 0}\fint_{B(x,r)}\frac{\log\mu\bigl( B(y,\delta) \bigr)}{\log\delta}\,d\mu(y) \le t+2s\frac{\mu\bigl( B(x,r) \setminus A \bigr)}{\mu\bigl( B(x,r) \bigr)}\]
and, consequently,
\[\udim_1(\mu,x)=\limsup_{r\downarrow 0}\udim_1\bigl( \mu,B(x,r) \bigr) \le t\]
for all density points of $A$. The claim follows since $\mu$ has the density point property.
\end{proof}

\begin{remark}\label{rem:dppneeded}
(1) By inspecting the above proof, we easily get a global analogue of Theorem \ref{thm:entropy}:
If $A\subset X$ is bounded and $\mu(A) > 0$, then it holds that
\begin{align*}
\mu\text{-}\esssup_{x\in A}\udimloc(\mu,x)&\ge \udim_1(\mu,A), \\
\mu\text{-}\essinf_{x\in A}\ldimloc(\mu,x)&\le \ldim_1(\mu,A).
\end{align*}
It is worthwhile to notice that the density point property is not needed in this case.

(2) In Examples \ref{ex:nodpp1} and \ref{ex:nodpp2}, we show that Theorem \ref{thm:entropy}
 does not hold without the density point property. This is a remarkable difference between
the global and local entropy dimensions.
\end{remark}

\section{Applications} \label{sec:applications}

In this section, we use the local homogeneity estimate of Theorem
\ref{thm:main} as the final step in proving various new results.
In fact, understanding the conical density and porosity questions in
\S \ref{conical}--\S \ref{metricporo} below was our main motivation for
investigating the local homogeneity. In addition to Theorem
\ref{thm:main}, the proofs will be based on already known geometric
conclusions.

\subsection{Upper conical densities in Euclidean spaces}\label{conical}

Let $G(d,n)$ be the Grasmann manifold of all $n$-dimensional linear subspaces
of $\R^d$ and $S^{d-1} = \{y \in \R^d : |y| =1\}$ the unit sphere in $\R^d$.
Then for $0 < \alpha \le 1$, $V \in G(d,d-k)$, $\theta \in S^{d-1}$, $x \in \R^d$
and $r>0$ we define cones
\[
 X(x,r,V,\alpha) = \{y \in B(x,r) : \dist(y-x,V)< \alpha|y-x|\}
\]
and
\[
 H(x,\theta,\alpha) = \{y \in \R^d : (y-x)\cdot \theta > \alpha|y-x|\}.
\]
With small $\alpha$ the cones $X(x,r,V,\alpha)$ are small cones around
the translate of the subspace $V$ by $x$, whereas the cone $H(x,\theta,\alpha)$
is almost a half-space from the point $x$ to the direction $\theta$.

The distribution of Hausdorff and packing type measures inside cones
is well studied and understood, see for example \cite{Marstrand1954,
  Salli1985, Mattila1988, Suomala2005a, KaenmakiSuomala2008}. For general measures
the following theorem was proved in \cite[Theorem
4.1]{CsornyeiKaenmakiRajalaSuomala2010} under the assumption that
the Hausdorff dimension of the measure is greater than $s$. We improve
this result by showing that the theorem
is true even if we assume a lower bound only for the packing (i.e.\ the upper local)
dimension of the measure.

\begin{theorem}\label{thm:conical}
  If $d \in \N$, $k \in \{ 0 ,\ldots,d-1\}$, $s>k$, and $0 < \alpha
  \le 1$, then there exists a constant $c = c(d,k,s,\alpha) > 0$ so
  that for every measure $\mu$ on $\R^d$
  we have
  \begin{equation*}
    \limsup_{r \downarrow 0} \inf_{\atop{\theta\in S^{d-1}}{V \in
        G(d,d-k)}} \frac{\mu\bigl( X(x,r,V,\alpha) \setminus
      H(x,\theta,\alpha) \bigr)}{\mu\bigl( B(x,r) \bigr)} > c
  \end{equation*}
  for $\mu$-almost all $x \in \R^d$ that satisfy $\udimloc(\mu,x)>s$.
\end{theorem}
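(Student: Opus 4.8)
The strategy is to combine the local homogeneity estimate of Theorem~\ref{thm:main} with a purely geometric statement about how packings of balls must distribute inside cones. Roughly speaking, Theorem~\ref{thm:main} tells us that at $\mu$-almost every point $x$ with $\udimloc(\mu,x)>s$, and for all sufficiently small $\delta$, there are arbitrarily small scales $r$ at which $B(x,r)$ admits a $(\delta r)$-packing $\BB$ with $\#\BB\ge\delta^{-m}$ (for some fixed $m$ with $k<m<s$) and $\mu(B)>\eps\mu(B(x,5r))$ for every $B\in\BB$. The key geometric input is then that, because $m>k$, such a large packing cannot be concentrated near a single $(d-k)$-plane through $x$ nor inside a half-space $H(x,\theta,\alpha)$: a positive proportion of the balls $B\in\BB$ must have their centres in $X(x,r,V,\alpha)\setminus H(x,\theta,\alpha)$, uniformly over $V\in G(d,d-k)$ and $\theta\in S^{d-1}$. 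Summing $\mu(B)>\eps\mu(B(x,5r))$ over those balls would give a lower bound of the form $\mu(X(x,r,V,\alpha)\setminus H(x,\theta,\alpha))\gtrsim (\text{proportion})\cdot\delta^{-m}\cdot\eps\cdot\mu(B(x,r))$; choosing $\delta$ and $\eps$ appropriately (with $\eps=\eps_0(m,s,N,\delta)$ as in Theorem~\ref{thm:main}) yields the desired constant $c$.

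\textbf{Step 1: the geometric packing lemma.} First I would isolate the following deterministic statement in $\R^d$: given $k\in\{0,\ldots,d-1\}$, $\alpha\in(0,1]$, and $m>k$, there exist $\delta_1=\delta_1(d,k,m,\alpha)>0$ and a constant $p=p(d,k,m,\alpha)>0$ so that for every $0<\delta<\delta_1$, every ball $B(x,r)$, and every $(\delta r)$-packing $\BB$ of $B(x,r)$ with $\#\BB\ge\delta^{-m}$, and for every $V\in G(d,d-k)$ and $\theta\in S^{d-1}$, at least $p\,\#\BB$ of the balls of $\BB$ have centre in $X(x,r,V,\alpha)\setminus H(x,\theta,\alpha)$. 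The proof of this is a counting argument: the portion of $B(x,r)$ lying in the ``bad'' set $B(x,r)\setminus(X(x,r,V,\alpha)\setminus H(x,\theta,\alpha))$ — that is, the union of the $\alpha$-neighbourhood of $x+V$ with the half-space $H(x,\theta,\alpha)$ — can be covered, at scale $\delta r$, by at most $c(d,k,\alpha)\,\delta^{-k}$ balls of radius $\delta r$ (the neighbourhood of a $(d-k)$-plane needs $\sim\delta^{-(d-k)}$ such balls intersected with a fixed ball, wait — one must be careful: it is the cone complement that is thin; the $\alpha$-neighbourhood of the $k$-codimensional plane is $(d-k)$-dimensional, needing $\sim\delta^{-(d-k)}$ balls, which is too many). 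The correct observation is rather that the complement $B(x,r)\setminus X(x,r,V,\alpha)$ is, up to the cone opening, an $\alpha$-neighbourhood of the orthogonal complement $V^\perp\in G(d,k)$, which is $k$-dimensional and hence meets at most $c\,\delta^{-k}$ balls of a $(\delta r)$-packing; similarly $H(x,\theta,\alpha)^c$ absorbs at most a fixed fraction, handled by a pigeonhole over finitely many directions. Since $\delta^{-k}=o(\delta^{-m})$ as $\delta\downarrow0$, for $\delta$ small at least half the balls survive; tracking the half-space costs only a further constant factor. This is the step I expect to require the most care, because one must make the cone-complement covering bound uniform in $V$ and $\theta$ and correctly account for the interaction of the two excluded regions.

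\textbf{Step 2: assembling the proof.} Fix $d$, $k$, $s>k$, $\alpha$. Choose $m$ with $k<m<s$. Apply Step~1 to get $\delta_1$ and $p$. Apply Theorem~\ref{thm:main} with this $m$ and $s$ to get $\delta_0=\delta_0(m,s,N)$, where $N=N(d)$ is the doubling constant of $\R^d$; pick any fixed $\delta<\min\{\delta_0,\delta_1\}$, then let $\eps_0=\eps_0(m,s,N,\delta)$ and fix $\eps=\eps_0$. Set
\[
 c=c(d,k,s,\alpha)=\tfrac12\,p\,\eps\,\delta^{-m}>0.
\]
Now let $x$ be any point with $\udimloc(\mu,x)>s$ outside the exceptional $\mu$-null set of Theorem~\ref{thm:main}. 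Then $\limsup_{r\downarrow0}\hom_{\delta,\eps,r}(\mu,x)\ge\delta^{-m}$, so there are arbitrarily small $r>0$ and $(\delta r)$-packings $\BB$ of $B(x,r)$ with $\#\BB\ge\delta^{-m}$ and $\mu(B)>\eps\mu(B(x,5r))\ge\eps\mu(B(x,r))$ for all $B\in\BB$. For each such $r$ and each $V\in G(d,d-k)$, $\theta\in S^{d-1}$, Step~1 gives a subcollection $\BB'\subset\BB$ with $\#\BB'\ge p\,\#\BB\ge p\,\delta^{-m}$ and all centres in $X(x,r,V,\alpha)\setminus H(x,\theta,\alpha)$. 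Since the balls of $\BB'$ are disjoint and (their centres, hence, after shrinking radii trivially, the balls themselves up to the usual harmless enlargement) contained in $X(x,r,V,\alpha)\setminus H(x,\theta,\alpha)$ — here one uses that the defining conditions of the cones are invariant under the small perturbation by $\delta r$, possibly after replacing $\alpha$ by $\alpha/2$ at the outset and relabelling — we obtain
\[
 \mu\bigl(X(x,r,V,\alpha)\setminus H(x,\theta,\alpha)\bigr)\ge\sum_{B\in\BB'}\mu(B)>p\,\delta^{-m}\cdot\eps\,\mu\bigl(B(x,r)\bigr)=2c\,\mu\bigl(B(x,r)\bigr).
\]
Taking the infimum over $V$ and $\theta$ and then $\limsup$ over the admissible $r\downarrow0$ yields the claimed inequality with constant $c$, completing the proof.
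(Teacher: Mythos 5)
There is a genuine gap, and it is exactly at the step you flagged as delicate: the geometric packing lemma of your Step~1 is false. For $V\in G(d,d-k)$ and $0<\alpha<1$, the set $B(x,r)\setminus X(x,r,V,\alpha)=\{y\in B(x,r):\dist(y-x,V)\ge\alpha|y-x|\}$ is not a thin ($\delta r$-scale) tube around the $k$-plane $x+V^\perp$; it is a solid cone of fixed opening around $x+V^\perp$, whose volume is a positive fraction (depending on $\alpha$) of the volume of $B(x,r)$. Consequently it contains on the order of $c(\alpha)\,\delta^{-d}$ disjoint balls of radius $\delta r$, not $c\,\delta^{-k}$ of them, and a $(\delta r)$-packing of cardinality $\delta^{-m}$ (with $m<s\le d$) can sit entirely inside this complement. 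So no proportion $p=p(d,k,m,\alpha)$ of the balls need have centres in $X(x,r,V,\alpha)\setminus H(x,\theta,\alpha)$, and the measure condition $\mu(B)>\eps\mu\bigl(B(x,5r)\bigr)$ coming from Theorem~\ref{thm:main} gives no geometric constraint at a single scale that would rule this out (think of a measure that, at that scale, is concentrated in the complementary cone). Your Step~2 then breaks down, since the whole lower bound rests on this nonexistent subfamily $\BB'$.

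The paper's proof avoids precisely this trap: from the homogeneity packing it only extracts the much weaker combinatorial property that every subfamily $\BB'$ with $\#\BB'\ge\#\BB/K$ is met $q$ times by some \emph{translate} of $V$ (this does follow from a $\delta^{-k}$-type counting bound, because one projects $B(x,r)$ onto the $k$-dimensional space $V^\perp$, where the projection is covered by $4^k\delta^{-k}$ balls of radius $\tfrac34\delta r$), and then it invokes Proposition~4.5 of \cite{CsornyeiKaenmakiRajalaSuomala2009}, whose content is exactly the non-trivial geometric conversion of ``many balls of the family lie on a common translate of $V$'' into the conical density statement: roughly, one changes the base point to a ball lying on that translate, from whose viewpoint the other balls hit by the same translate do lie in the cones around $V$ and on both sides of any half-space, and one then transfers the conclusion back to $\mu$-almost every $x$ by an a.e./density argument. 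Your use of Theorem~\ref{thm:main} in Step~2 (choosing $k<m<s$, $\delta<\delta_0$, $\eps=\eps_0$) matches the paper's, but the single-point counting argument cannot replace that geometric machinery; to repair the proof you would either have to reprove something like \cite[Proposition 4.5]{CsornyeiKaenmakiRajalaSuomala2009} or cite it, as the paper does.
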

\begin{proof}
We can reduce the proof to verifying the following condition, see
\cite[Proposition 4.5]{CsornyeiKaenmakiRajalaSuomala2010}:
For a given
$q,K \in \N$ and $1 < t < \infty$ there exists a constant $\eps =
\eps(d,k,s,q,K,t)>0$ so that for $\mu$-almost all $x
\in \{ y \in \R^d : \udimloc(\mu,y)>s \}$ we may
find arbitrarily small radii $r>0$ and ball families $\BB$ with the
following properties:
\begin{enumerate}
  \item \label{assu1} $B \subset B(x,r)$ for all $B \in \BB$.
  \item \label{assu2} The collection $t\BB = \{tB : B\in\mathcal{B}\}$
    is a packing.
  \item \label{assu3} $\mu(B) > \eps\mu\bigl( B(x,3r) \bigr)$  for all
    $B\in\mathcal{B}$.
  \item \label{assu4} If $\mathcal{B}'\subset\mathcal{B}$ with
    $\#\mathcal{B}'\geq\#\mathcal{B}/K$ and $V\in G(d,d-k)$, then
    there is
a translate of $V$ intersecting at least $q$ balls from the collection
$\mathcal{B'}$.
\end{enumerate}
We will construct the families $\BB$ with the help of Theorem
\ref{thm:main}. Let $M = M(N_d,t^{-1})$ be the constant from Lemma
\ref{thm:covering_thm}\eqref{covering4}, where $N_d$ is the doubling constant of
$\R^d$. Let $m = (s+k)/2$ and choose
$0<\delta<\min\{\delta_0,\frac{1}{4}\}$ so that $4^{-k}\delta^{k-m}
\ge 2KMq$, where $\delta_0$ is as in Theorem \ref{thm:main}. By
Theorem \ref{thm:main} there is $\eps = \eps(m,s,N_d,\delta)> 0$ so that
$\limsup_{r \downarrow 0} \hom^{5}_{\delta,\eps,r}(\mu,x)\geq\delta^{-m}$ for
$\mu$-almost all $x \in \R^d$ that satisfy $\udimloc(\mu,x)>s$.
Fix such a point $x$ and let $r>0$ so
that $\hom_{\delta,\eps,\frac{3}{4}r}(\mu,x)> \delta^{-m}/2$. Now there
is a $(\frac{3}{4}\delta r)$-packing of $B(x,\frac{3}{4}r)$, say
$\BB_0$, with $\#\BB_0 > \delta^{-m}/2$ so that $\mu(B) > \eps\mu\bigl(
B(x,\frac{15}{4}r) \bigr)\geq \eps\mu\bigl( B(x,3r) \bigr)$ for all $B \in \BB_0$.

Lemma \ref{thm:covering_thm}\eqref{covering4} gives a subcollection $\BB \subset
\BB_0$ for which $t\BB$ is also a packing and $\#\BB \ge \#\BB/M \ge
\delta^{-m}/(2M)$. Now, because $\delta \le \frac{1}{4}$, $B \subset
B(x,r)$ for each $B \in \BB$. Thus conditions
(\ref{assu1})--(\ref{assu3}) hold. The only property we need to verify
is the condition (\ref{assu4}). Suppose that $\BB' \subset \BB$ with
$\#\BB' \ge \#\BB/K \ge \delta^{-m}/(2KM)$, and let $V \in
G(d,d-k)$. The orthogonal projection of $B(x,r)$ into the orthogonal
complement of $V$ can be covered by $4^k\delta^{-k}$ balls of radius
$\frac{3}{4}\delta r$ and so some translate of $V$ must intersect at
least
\[
4^{-k}\delta^k\#\BB' \ge \frac{4^{-k}\delta^{k-m}}{2KM}\ge q
\]
balls from the collection $\BB'$. Thus also (\ref{assu4}) holds and the proof is finished.
\end{proof}

\subsection{Porous measures on Euclidean spaces}\label{sec:kpor}
We first define porosity for sets. Let $A \subset \R^d$, $k \in \{ 1,\ldots,d \}$, $x \in A$, and $r > 0$. We define
\[
 \begin{split}
  \por_k(A,x,r) = \sup\{\roo \ge 0 :\;&\text{there are }y_1,\dots, y_k\in \R^d \text{ such that for every } i \\
  &A \cap B(y_i,\roo r) = \emptyset \text{ and } \roo r+|x-y_i| \leq r,\\
  &\text{and }(y_i-x)\cdot(y_j-x) = 0 \text{ if } j \ne i\}
\end{split}
\]
and from this the \emph{$k$-porosity of $A$ at $x$} as
\[
 \por_k(A, x) = \liminf_{r \downarrow 0} \por_k(A, x, r).
\]
We refer to the balls $B(y_i,\roo r)$ in the definition as holes.
The notion of $k$-porosity was introduced in \cite{KaenmakiSuomala2004}.

When we combine this definition with the porosity for measures,
defined for the first time in
\cite{EckmannJarvenpaaJarvenpaa2000}, we obtain $k$-porosity for
measures: Let $\mu$ be a measure on $\R^d$, $k \in \{ 1,\ldots,d \}$,
$x \in \R^d$, $r>0$, and $\eps > 0$. We set
\[
 \begin{split}
  \por_k(\mu,x,r,\eps) = \sup\{\roo \ge 0 :\;&\text{there are }y_1,\dots, y_k\in \R^d \text{ such that for every } i \\
  &\mu\bigl( B(y_i,\roo r) \bigr) \le \eps \mu\bigl( B(x,r) \bigr) \text{ and }\roo r+|x-y_i| \leq r, \\
  &\text{and }(y_i-x)\cdot(y_j-x) = 0 \text{ if } j \ne i\}
\end{split}
\]
and the \emph{$k$-porosity of the measure $\mu$ at $x$} is defined to be
\[
 \por_k(\mu, x) = \lim_{\eps \downarrow 0}\liminf_{r \downarrow 0}\por_k(\mu,x,r,\eps).
\]
It follows from \cite[\S 2]{EckmannJarvenpaaJarvenpaa2000} that $\por_k(\mu,x) \le \tfrac12$
for $\mu$-almost all $x \in \R^d$.
We remark that a more precise name for the porosity just defined would be lower porosity,
to distinguish this notion from the upper porosity of sets and measures, see e.g.\ \cite{MeraMoranPreissZajicek2003, Suomala2008}.

We provide an upper bound for the upper local dimension of measures
with $k$-porosity close to the maximum value $\tfrac12$. In
\cite{RajalaSmirnov2007}, this result was proved for $k=1$. The first
estimates for the dimension of sets with $1$-porosity close to
$\frac{1}{2}$ are from \cite{Mattila1988} and \cite{Salli1991}. For
more recent results on the dimension of porous sets and measures; see
\cite{KaenmakiSuomala2004, JarvenpaaJarvenpaaKaenmakiSuomala2005, Rajala2009, Chousionis2008b} and
\cite{EckmannJarvenpaaJarvenpaa2000, JarvenpaaJarvenpaa2002, BeliaevSmirnov2002, KaenmakiSuomala2008, RajalaSmirnov2007}.
It is important to notice both here and in Theorem
\ref{thm:metricporo} that even if $\por_1(\mu,x)>0$ in a set of positive $\mu$-measure,
it is possible that $\mu(A)=0$ for all $A \subset X$ with $\inf_{x \in A}\por_1(A,x)>0$,
see \cite[Theorem 4.1]{RajalaSmirnov2007}.

\begin{theorem}\label{thm:kpor}
  If $d \in \N$, then there exists a constant $c=c(d)>0$ so that for every measure $\mu$ on $\R^d$ we have
  \begin{equation*}
    \udimloc(\mu,x) \le d-k+\frac{c}{-\log\bigl( 1-2\por_k(\mu,x) \bigr)}
  \end{equation*}
  for $\mu$-almost all $x\in\R^d$.
\end{theorem}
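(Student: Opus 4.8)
The plan is to deduce the bound from Theorem \ref{thm:homodim}, so that it suffices to prove, for $\mu$-almost every $x\in\R^d$,
\[
\dimhomo(\mu,x)\le d-k+\frac{c}{-\log\bigl(1-2\por_k(\mu,x)\bigr)}.
\]
Since $\dimhomo(\mu,x)\le d$ always (a volume count, as in Lemma \ref{thm:space_dim}) and the right-hand side is monotone in $\por_k(\mu,x)$, it is enough to treat points at which the right-hand side is $<d$, i.e. at which $\por_k(\mu,x)$ exceeds a dimensional threshold $\roo_0=\roo_0(d)<\tfrac12$. Fixing a rational $\roo\in(\roo_0,\tfrac12)$, I would show $\dimhomo(\mu,x)\le d-k+c/(-\log(1-2\roo))$ for $\mu$-almost every $x$ in the Borel set $\{y:\por_k(\mu,y)>\roo\}$ (its measurability being handled exactly as for $\por_1$, cf.\ \cite[\S 2]{EckmannJarvenpaaJarvenpaa2000}); taking the union over the countably many admissible $\roo$ then yields the theorem, with $c=c(d)$ uniform over $k\in\{1,\ldots,d\}$.

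The next step is to localise to uniform porosity on a closed set. Writing $\{y:\por_k(\mu,y)>\roo\}$ as a countable union of the sets $E_{\eps_1,r_0}=\{y:\por_k(\mu,y,r,\eps_1)>\roo\text{ for all }0<r<r_0\}$ and using inner regularity of $\mu$, one may replace the problem by that of bounding $\dimhomo$ on a compact set $K$ of positive $\mu$-measure on which the $k$-porosity of $\mu$ is uniform, and (by a further standard restriction, using that $\R^d$ has the density point property) on which $\mu\bigl(K\cap B(z,r)\bigr)\ge\tfrac12\mu\bigl(B(z,r)\bigr)$ for all $z\in K$ and $0<r<r_0$. Put $\nu=\mu|_K$. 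By Remark \ref{rem:homodim}(2) one has $\dimhomo(\nu,x)=\dimhomo(\mu,x)$ for $\mu$-almost every $x\in K$, and the uniform density bound transfers the $k$-porosity to $\nu$ (with $\eps_1$ replaced by $2\eps_1$) at \emph{every} point of $\spt(\nu)\subset K$, not merely at $\mu$-almost every point. This last fact is the crucial structural gain: the porosity holes are available at the centre of every ball we shall use, which is what makes an iteration possible even though $\spt(\nu)$ itself need not be a porous set.

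It then remains to prove a quantitative multiscale estimate on $\hom_{\delta,\eps,r}(\nu,x)$. Choose $\lambda\in(\roo,1)$ with $-\log\lambda\asymp-\log(1-2\roo)$, put $\delta=\lambda^{N}$, and run the following refinement starting from $B(x,r)$: at generation $j$ cover the part of $\spt(\nu)$ under consideration by boundedly overlapping balls of radius $\lambda^{j}r$ centred in $\spt(\nu)\subset K$; at each such centre $z$ invoke the uniform $k$-porosity of $\nu$ at $z$ and scale $\lambda^{j}r$ to obtain $k$ mutually orthogonal holes; and pass to generation $j+1$ keeping only the $\lambda^{j+1}r$-balls that are not swallowed by one of these holes, since any ball inside a hole carries $\nu$-mass at most $2\eps_1\nu\bigl(B(z,\lambda^{j}r)\bigr)$. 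After $N$ steps, choosing $\eps=\eps(\delta,\roo)$ small enough that exactly the surviving balls can carry $\nu$-mass exceeding $\eps\,\nu\bigl(B(x,5r)\bigr)$, one gets $\hom_{\delta,\eps,r}(\nu,x)$ bounded by the surviving count. The heart of the matter — and the step I expect to be the main obstacle — is the purely geometric lemma that each refinement multiplies the count by at most $C(d)\,\lambda^{-(d-k)}$: in the $d-k$ directions transverse to the hole directions nothing is saved, but in each of the $k$ hole directions the near-half-ball hole (its relative radius $\roo$ being within $\tfrac12-\roo$ of $\tfrac12$) confines the surviving sub-balls to a region whose relative extent in that direction is $O(1-2\roo)$, so covering it costs $O\bigl(\lambda^{-(d-1)}(1-2\roo)\lambda^{-1}\bigr)$ rather than $\lambda^{-d}$ balls; this is where the balls-versus-halfspaces bookkeeping is delicate, and it is the analogue for measures of the set estimates of \cite{KaenmakiSuomala2004} and (for $k=1$) \cite{RajalaSmirnov2007}. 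Granting the lemma, iteration gives $\hom_{\delta,\eps,r}(\nu,x)\le\bigl(C(d)\lambda^{-(d-k)}\bigr)^{N}=\delta^{-(d-k)}\,\delta^{\,O(1)/\log\lambda}$, hence $\dimhomo(\nu,x)\le d-k+c/(-\log(1-2\roo))$ upon letting $\delta\downarrow0$ along $\lambda^{N}$, which combined with the reductions above completes the argument.
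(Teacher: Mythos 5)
The central step of your plan—the ``purely geometric lemma'' that each refinement multiplies the count by at most $C(d)\lambda^{-(d-k)}$—is false for the refinement you actually describe. At generation $j$ you invoke the porosity only at the centre $z$ of each ball $B(z,\lambda^j r)$, obtaining a single family of $k$ orthogonal ball-shaped holes of relative radius $\roo<\tfrac12$, and discard the sub-balls swallowed by them. Such a family removes only a bounded fraction of $B(z,\lambda^j r)$: already for $d=k=1$ the complement of one hole of relative radius $\roo$ in an interval has length comparable to the whole interval, so covering the surviving region by balls of radius $\lambda^{j+1}r$ costs $\asymp\lambda^{-d}$ balls, not $C(d)\lambda^{-(d-k)}$; the claimed confinement to a region of relative width $O(1-2\roo)$ in each hole direction would hold for half-space-like holes, but a ball-shaped hole does not confine points that are off its axis. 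With only the centre's holes the iteration yields a per-step factor of order $\lambda^{-d}$ times a constant smaller than one, hence at best a bound of the type $d-c\roo^{d}/\log\frac1\lambda$, not $d-k+c/(-\log(1-2\roo))$. The codimension-$k$ saving requires holes at (essentially) every surviving centre at scale $\lambda^j r$, i.e.\ $k$-porosity of the set of packing centres as a set; this is exactly what the paper establishes in Lemma \ref{lma:geometric} (each packing ball of mass $>\eps\mu\bigl(B(x,5r)\bigr)$ must contain a point of the uniform porosity set, by the density-point threshold, and transplanting that point's holes makes the centre set $(\roo-2(1-2\roo))$-porous at every one of its points), and it is what feeds the multi-point covering Lemma \ref{lma:kporcover}, whose proof needs the holes of all points of the set to build the $(d-k)$-dimensional surface $M$. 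Your refinement never establishes this set-porosity at the intermediate generations, so the key estimate is missing, and it is precisely the hard part: recall the paper's warning (and \cite[Theorem 4.1]{RajalaSmirnov2007}) that a porous measure need not charge any porous set, so neither $\spt(\nu)$ nor the surviving region may be treated as a porous set across scales without the ``high-mass ball contains a good point'' argument at every generation.

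There is also an architectural point. The paper deliberately avoids bounding $\dimhomo(\mu,x)$: porosity yields a homogeneity bound only at the single scale ratio $\delta=1-2\roo$ and for one $\eps$ at a time (Lemma \ref{lma:geometric}), and the conclusion is drawn from the quantitative, fixed-$\delta$ Theorem \ref{thm:main} rather than from Theorem \ref{thm:homodim}; your route through $\dimhomo$ forces the multiscale iteration above and moreover needs the estimate on $\hom_{\delta,\eps,r}$ for all $\eps\downarrow0$ (bounding it for one $\eps$ tied to $\eps_1$ does not bound $\hom_\delta$, which is the increasing limit as $\eps\downarrow0$), with the uniformity sets depending on $\eps$. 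Two further inaccuracies: the parameter choice ``$\lambda\in(\roo,1)$ with $-\log\lambda\asymp-\log(1-2\roo)$'' is inconsistent as $\roo\uparrow\tfrac12$ (you presumably intend $\lambda=1-2\roo$); and the transfer of the porosity of $\mu$ to $\nu=\mu|_K$ at \emph{every} point of $\spt(\nu)$ needs a uniform lower density bound for the restricted set itself at those points, whereas the Egorov step gives the density of $K$ only at points of a further subset. If you repair the per-step lemma by the ``each high-mass ball meets the uniform set'' mechanism, you will essentially have reproduced Lemmas \ref{lma:kporcover}--\ref{lma:geometric}, at which point the paper's single-scale argument via Theorem \ref{thm:main} is both simpler and sufficient.
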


\begin{remark} \label{pororem}
  (1) It is rather easy to see that the upper bound in Theorem
  \ref{thm:kpor} is asymptotically sharp as
  $\por_k(\mu,x)\uparrow\tfrac12$: For each $\roo<\tfrac12$ there
  exists a measure $\mu$ on $\R^d$ with $\por_k(\mu,x)\ge\roo$ while
  $\ldimloc(\mu,x)\ge d-k-c/\log(1-2\roo)$ for $\mu$-almost all $x \in
  \R^d$. The easiest way to see this is to consider a regular Cantor
  set $C\subset\R$ with
  $1$-porosity $\roo$ and to let $\mu$ be the natural measure on
  $C^k\times[0,1]^{d-k}$.

  (2) The proof of Theorem \ref{thm:kpor} in the case $k=1$ given in
  \cite{RajalaSmirnov2007} is based on an extensive use of dyadic cubes.
  The interplay
  between cubes and balls caused many technical problems, which were
  finally solved by considering the boundary regions of cubes
  separately. The method used there does not work for $k$-porosity
  when $k \ge 2$ although the statement itself has nothing to do with
  co-dimension being one.
\end{remark}

Before proving Theorem \ref{thm:kpor}, we will exhibit a couple of geometric lemmas concerning $k$-porous sets.

\begin{lemma}\label{lma:kporcover}
If $A \subset B(x_0,r) \subset \R^d$ is so that $\por_k(A,z,r) \ge
\roo$ for every $z \in A$, then the set $A$ can be covered with
$c(1-2\roo)^{k-d}$
balls of radius $(1-2\roo)r$, where $c > 0$ depends only on $d$.
\end{lemma}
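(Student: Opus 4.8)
The plan is to reduce the covering assertion to a counting estimate for a separated subset of $A$ and then extract enough rigidity from the $k$ orthogonal holes to carry out the count. After rescaling I would assume $r=1$ and $x_0=0$, and write $\lambda=1-2\roo$; if $\roo$ is bounded away from $\tfrac12$ by an amount depending only on $d$ (i.e.\ $\lambda$ is bounded below) the statement is trivial because $A\subset B(0,1)$ is then covered by $c(d)$ balls of radius $\lambda$, so I assume $\lambda$ small. A routine approximation (shrinking $\roo$ slightly, absorbing the loss into the constant) lets me assume that each $z\in A$ admits holes $B(y_{z,j},\roo)$, $j=1,\dots,k$, with the vectors $y_{z,j}-z$ pairwise orthogonal, $B(y_{z,j},\roo)\cap A=\emptyset$, and $\roo<|z-y_{z,j}|\le1-\roo$. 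Fixing a maximal $\lambda$-separated set $P\subset A$, maximality gives $A\subset\bigcup_{p\in P}B(p,\lambda)$, so it suffices to show $\#P\le c(d)\lambda^{k-d}$.

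The pointwise input comes from expanding $p'\notin B(y_{p,j},\roo)$ for $p,p'\in P$. With $e^p_j=(y_{p,j}-p)/|y_{p,j}-p|$ and $t_{p,j}=|y_{p,j}-p|$, writing $p'-p$ in an orthonormal basis completing $(e^p_1,\dots,e^p_k)$ turns $|p'-y_{p,j}|>\roo$ into $(p'-p)\cdot e^p_j<\bigl(|p'-p|^2+t_{p,j}^2-\roo^2\bigr)/(2t_{p,j})$; since $0<t_{p,j}^2-\roo^2\le\lambda$ and $2t_{p,j}\ge2\roo=1-\lambda$, this yields, for all $p,p'\in P$ and all $j$,
\begin{equation*}
  (p'-p)\cdot e^p_j<2|p'-p|^2+2\lambda .
\end{equation*}
Next I would partition the compact manifold of orthonormal $k$-frames in $\R^d$ into $c(d)$ pieces of diameter $\le\eps=\eps(d)$; it is enough to bound the number of $p\in P$ whose hole-frame $(e^p_1,\dots,e^p_k)$ lies in one fixed piece, so I may assume all these frames are $\eps$-close to a single orthonormal basis $e_1,\dots,e_k$ of a $k$-plane $W$. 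Put $V=W^\perp$ and let $\pi\colon\R^d\to V$ be the orthogonal projection. I cover $\pi(B(0,1))\subset V\cong\R^{d-k}$ by $c(d)\lambda^{k-d}$ balls of radius $\tfrac{\lambda}{10}$, and for each centre $q$ set $P_q=\{p\in P:\pi(p)\in B(q,\tfrac{\lambda}{10})\}$; the claim reduces to $\#P_q\le c(d)$.

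For the fibre bound, take $p,p'\in P_q$ and decompose $p'-p=w+\delta$ with $w\in W$ and $\delta\in V$, $|\delta|\le\tfrac{\lambda}{5}$. Applying the displayed inequality to $p$ and, symmetrically, to $p'$, and using $\delta\cdot e_i=0$ together with the $\eps$-closeness of both frames to $e_1,\dots,e_k$, one obtains $|w\cdot e_i|\le C(d)\bigl(|w|^2+\lambda+\eps|w|\bigr)$ for every $i$. Summing the squares and choosing $\eps=\eps(d)$ small enough then forces the dichotomy $|w|\le C_1(d)\lambda$ or $|w|\ge c_2(d)$ (a constant). Consequently the $W$-projections of the points of $P_q$ split into $\le c(d)$ clumps of diameter $\le C_1(d)\lambda$ that are pairwise more than $c_2(d)$ apart — a short connectedness argument excludes intermediate distances, since along a chain with consecutive jumps $\le C_1(d)\lambda$ joining a point at distance $0$ to one at distance $>c_2(d)$ there would be a point at a forbidden distance near $c_2(d)/2$. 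Each clump, intersected with the fibre, has diameter $\le c(d)\lambda$ and hence contains at most $c(d)$ points of the $\lambda$-separated set $P$; thus $\#P_q\le c(d)$ and $\#P\le c(d)\lambda^{k-d}$.

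The main obstacle is exactly this fibre estimate: a single hole blocks only one side of one direction, so no single point's holes confine $A$ to a thin slab, and the confinement emerges only after (i) localising the frame directions, (ii) fibring over the $(d-k)$-plane $V$, and (iii) combining the hole conditions of \emph{both} endpoints of a pair, which is what produces the two-scale dichotomy for the $W$-displacement $w$. The remaining ingredients — the reduction to counting, the expansion giving the pointwise inequality, the trivial regime where $\roo$ is bounded away from $\tfrac12$, and the approximation letting the holes realise $\roo$ — are routine.
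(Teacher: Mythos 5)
Your argument is correct, and it reaches the covering bound by a genuinely different mechanism than the paper. The paper also begins by localising the hole directions (it covers $\R^d$ by $c(d)$ narrow cones and splits $A$ into classes $A_\iii$ according to which cone each of the $k$ orthogonal hole directions falls in, orthogonality killing most classes), but then it argues geometrically: it forms the surface $M$ obtained by intersecting the boundaries of the unions of the $j$-th holes, shows that the orthogonal projection of $M$ onto $\bigcap_j\theta_{i_j}^\perp$ is bi-Lipschitz with constant $2$ and that every point of the localised class lies within $2\sqrt{d}(1-2\roo)r$ of $M$, so the class sits in a thin neighbourhood of a $(d-k)$-dimensional Lipschitz graph and is covered by $c(1-2\roo)^{k-d}$ balls of radius $(1-2\roo)r$. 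You instead reduce to counting a maximal $(1-2\roo)r$-separated subset, localise the frames on the Stiefel manifold, fibre over $V=W^\perp$, and extract the confinement from the two-point inequality obtained by expanding the hole condition at \emph{both} endpoints of a pair, which yields the dichotomy $|w|\lesssim\lambda$ or $|w|\gtrsim 1$ for the $W$-displacement and then a clustering/chain argument per fibre; your treatment of the unattained supremum in the definition of $\por_k$ and of the trivial regime $\roo$ bounded away from $\tfrac12$ is fine (the paper glosses over the former as well). What each route buys: your proof replaces the ``simple (but rather technical) geometric inspections'' that the paper explicitly omits by short explicit algebraic estimates, so it is more self-contained and easier to check quantitatively; the paper's proof, on the other hand, exhibits the structural picture that a uniformly $k$-porous set lies in a $c(1-2\roo)r$-neighbourhood of a $(d-k)$-dimensional Lipschitz surface, which is more information than the bare covering count. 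Both give constants depending only on $d$, as required.
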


\begin{proof}
  The proof is based on similar geometric arguments as used in
  \cite[Theorem 2.5]{JarvenpaaJarvenpaaKaenmakiSuomala2005},
  \cite[Lemmas 3.4 and 3.5]{RajalaSmirnov2007}, and \cite[Lemma
  5.1]{Rajala2009}.
  In the proof, we will omit some of the elementary, if tedious, details.

  Let $c_1,c_2,c_3>0$ be small constants.
  We may assume that $\roo>\tfrac12-c_1$. A simple compactness argument
  implies that $\R^d$ can be covered by $m=m(d,c_2)$ cones $\{
  H(0,\theta_i,1-c_2) \}_{i=1}^m$. Observe that $H(0,\theta_i,1-c_2)$ is a cone to the
  direction $\theta_i \in S^{d-1}$ with a small opening angle.

  For each point $y \in A$ denote the centres of the holes obtained
  from the $k$-porosity on the scale $r$ by $y_1,\dots,y_k$. Thus, $A
  \cap B(y_i,\roo r) = \emptyset$ and $|y_i-y| + \roo r \le r$ for
  every $i$, and $(y_i-y) \cdot (y_j-y) = 0$ whenever $i \ne j$. We
  observe that $A$ may be divided into $m^k$ sets of the form
  \[
    A_\iii = \bigl\{y \in A : y_j-y \in H(0,\theta_{i_j},1-c_2) \text{ for every } j
    \in \{ 1,\dots,k \} \bigr\}.
  \]
  where $\iii = (i_1,\ldots,i_k) \in \{1,\ldots,m\}^k$. Since $(y_i-y)
  \cdot (y_j-y) = 0$ for all $y \in A$ and all $i \neq j$, it follows
  that actually most of the sets $A_\iii$ are empty. Fix $\iii$ so
  that $A_\iii \neq \emptyset$ and choose $x$ so that $A_\iii \cap
  B(x,c_3 r) \neq \emptyset$. Define
  \[
    M_j = B(x,2 c_3r) \cap \partial\biggl( \bigcup_{y \in A_\iii \cap
      B(x,c_3 r)} B(y_{j},\roo r) \biggr)
  \]
  for all $j \in \{1,\ldots,k \}$ and let
  \[
    M = \bigcap_{j=1}^k M_j.
  \]
  Here $\partial C$ is the topological boundary of a given set $C$.

  By simple (but rather technical) geometric inspections, we observe
  that if $c_1$, $c_2$, and $c_3$ are chosen small enough (depending
  only on $d$), then the following assertions are true: If $f$ is the
  orthogonal projection from $M$ to the $k$-dimensional linear
  subspace $\bigcap_{j=1}^k\theta_{i_j}^\perp$, then
  \[
    |f(y)-f(z)|\le |y-z|\le 2|f(y)-f(z)|
  \]
  for all $y,z\in M$, so $f$ is bi-Lipschitz with constant
  $2$. Moreover, $\dist(y, M)\le 2\sqrt{d}(1-2\roo)r$ for all $y\in
  A_\iii\cap B(x,c_3 r)$. These estimates easily imply that $B(x,c_3
  r)\cap A_\iii$ may be covered by $c_4(1-2\roo)^{k-d}$ balls of
  radius $(1-2\roo)r$, where $c_4$ depends only on $d$ and the choice
  of $c_3$. On the other hand, the set $A_\iii\cap B(x,r)$ is clearly
  covered by $2^{2d} c_{3}^{-d}$ balls of radius $c_3 r$ and finally
  $A$ is covered by less than $m^k 2^{2d} c_{3}^{-d} c_4
  (1-2\roo)^{k-d}$ balls of radius $(1-2\roo)r$.
\end{proof}

Next we turn the previous lemma into a homogeneity estimate.

\begin{lemma}\label{lma:geometric}
If $0<\roo<\tfrac12$ and $\mu$ is a measure on $\R^d$ such that $\mu(A)>0$, where $A \subset \{x\in\R^d : \por_k(\mu,x) > \roo\}$, then for
each $\eps > 0$ there is a Borel set $A_\eps \subset A$ with $\mu(A_\eps) > 0$ such that
\[
\limsup_{r \downarrow 0} \hom^{5}_{1-2\roo,\eps,r}(\mu,x) < c(1-2\roo)^{k-d}
\]
for every $x \in A_\eps$, where $c > 0$ depends only on $d$.
\end{lemma}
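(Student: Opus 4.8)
The plan is to split the estimate according to whether $\roo$ is bounded away from $\tfrac12$ or close to it, and in the latter (essential) case to reduce everything to a single application of Lemma \ref{lma:kporcover} to a \emph{finite} point set that turns out to be genuinely set‑porous. If $\roo\le\tfrac27$, then $1-2\roo\ge\tfrac37$ and, by Lemma \ref{thm:covering_thm}(2), every $((1-2\roo)r)$‑packing of $B(x,r)$ has at most $(1-2\roo)^{-\log_2 N}\le(7/3)^{\log_2 N}=:C_0(d)$ balls; since $(1-2\roo)^{k-d}>1$, the inequality $\hom_{1-2\roo,\eps,r}(\mu,x)\le C_0(d)<c(1-2\roo)^{k-d}$ then holds for \emph{every} $x$ once $c=c(d)$ is large enough, and one simply takes $A_\eps=A$. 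So assume henceforth $\roo>\tfrac27$, write $\delta=1-2\roo$, and construct $A_\eps$ as follows. Since $\por_k(\mu,x,\rho,\eta)$ is nondecreasing in $\eta$ and $\por_k(\mu,x)=\lim_{\eta\downarrow0}\liminf_{\rho\downarrow0}\por_k(\mu,x,\rho,\eta)$, the hypothesis $\por_k(\mu,x)>\roo$ on $A$ gives $\liminf_{\rho\downarrow0}\por_k(\mu,x,\rho,\eps/2)>\roo$ for every $x\in A$, so each $x\in A$ has a scale below which $\por_k(\mu,x,\rho,\eps/2)>\roo$ for all smaller $\rho$; using the measurability of $\por_k$ (handled as for $\por_1$) and $\mu(A)>0$, I would fix $r_\eps>0$ and a Borel set $A'\subset A$ with $\mu(A')>0$ such that $\por_k(\mu,\xi,\rho,\eps/2)>\roo$ for all $\xi\in A'$ and $0<\rho<r_\eps$, and then, $\R^d$ having the density point property, let $A_\eps$ be the set of $\mu$‑density points of $A'$, so $A_\eps\subset A$ and $\mu(A_\eps)=\mu(A')>0$.

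Now fix $x\in A_\eps$ and $r>0$ so small that $3r<r_\eps$ and $\mu\bigl(B(x,5r)\setminus A'\bigr)<\tfrac\eps2\mu\bigl(B(x,5r)\bigr)$, the latter being possible because $x$ is a density point of $A'$. Let $\{B(z_1,\delta r),\dots,B(z_n,\delta r)\}$ be a $(\delta r)$‑packing of $B(x,r)$ with $\mu\bigl(B(z_j,\delta r)\bigr)>\eps\mu\bigl(B(x,5r)\bigr)$ for all $j$; as $\hom_{1-2\roo,\eps,r}(\mu,x)$ is the supremum of such cardinalities $n$, it suffices to bound $n$. Because $B(z_j,\delta r)\subset B(x,5r)$, we get $\mu\bigl(A'\cap B(z_j,\delta r)\bigr)>\tfrac\eps2\mu\bigl(B(x,5r)\bigr)>0$, so we may pick $\xi_j\in A'\cap B(z_j,\delta r)$; these $\xi_j$ are distinct and lie in $B(x,2r)\subset B(x,3r)$. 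For each $j$ the relation $\por_k(\mu,\xi_j,3r,\eps/2)>\roo$ provides $\eta_1^j,\dots,\eta_k^j$ with $(\eta_i^j-\xi_j)\cdot(\eta_{i'}^j-\xi_j)=0$ for $i\ne i'$, with $3\roo r+|\xi_j-\eta_i^j|\le3r$, and with $\mu\bigl(B(\eta_i^j,3\roo r)\bigr)\le\tfrac\eps2\mu\bigl(B(\xi_j,3r)\bigr)\le\tfrac\eps2\mu\bigl(B(x,5r)\bigr)$. The decisive point is that a heavy packing ball cannot lie inside such a light hole: $\mu\bigl(B(z_l,\delta r)\bigr)>\eps\mu\bigl(B(x,5r)\bigr)>\mu\bigl(B(\eta_i^j,3\roo r)\bigr)$ forces $B(z_l,\delta r)\not\subset B(\eta_i^j,3\roo r)$, hence $|z_l-\eta_i^j|>3\roo r-\delta r=(5\roo-1)r$ and therefore $|\xi_l-\eta_i^j|>(5\roo-1)r-\delta r=(7\roo-2)r$ for every $l$. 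Thus, with $\Xi=\{\xi_1,\dots,\xi_n\}\subset B(x,3r)$, the balls $B(\eta_i^j,(7\roo-2)r)$ are disjoint from $\Xi$, are orthogonal at $\xi_j$, and satisfy $(7\roo-2)r+|\xi_j-\eta_i^j|\le(4\roo+1)r\le3r$; consequently $\por_k(\Xi,\xi_j,3r)\ge(7\roo-2)/3>0$ for every $\xi_j\in\Xi$, which is the one place the assumption $\roo>\tfrac27$ is used.

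Since $\Xi\subset B(x,3r)$, Lemma \ref{lma:kporcover} covers $\Xi$ by $c\bigl(1-\tfrac23(7\roo-2)\bigr)^{k-d}=c\bigl(\tfrac73\delta\bigr)^{k-d}=c'(d)\delta^{k-d}$ balls of radius $\bigl(1-\tfrac23(7\roo-2)\bigr)3r=7\delta r$. As the centres $z_j$ are $2\delta r$‑separated and $|\xi_j-z_j|\le\delta r$, each of these balls contains at most $C_4(d)$ of the $\xi_j$ (Lemma \ref{thm:covering_thm}(2) again), so $n=\#\Xi\le c'(d)C_4(d)\,\delta^{k-d}=:c''(d)(1-2\roo)^{k-d}$. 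Hence $\hom_{1-2\roo,\eps,r}(\mu,x)\le c''(d)(1-2\roo)^{k-d}$ for all sufficiently small $r$, and letting $r\downarrow0$ and choosing $c=c(d)$ larger than $\max\{C_0(d),c''(d)\}$ gives $\limsup_{r\downarrow0}\hom_{1-2\roo,\eps,r}(\mu,x)<c(1-2\roo)^{k-d}$ for all $x\in A_\eps$, as required.

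The step I expect to require the most care is the conversion of measure‑porosity into set‑porosity: one has to use the density point property to place a point of $A'$ inside every heavy packing ball, and then the elementary but decisive fact that a ball of $\mu$‑mass exceeding $\eps\mu(B(x,5r))$ cannot fit into a hole of $\mu$‑mass at most $\tfrac\eps2\mu(B(x,5r))$ converts smallness of $\mu$‑mass into disjointness from $\Xi$; crucially, the orthogonality of the holes is inherited for free from the definition of $\por_k(\mu,\xi_j,\cdot,\cdot)$, so no perturbation of hole centres is needed. The surrounding technicalities — the uniformisation producing $r_\eps$, the measurability of $\por_k$, the bookkeeping of the radii $2r,3r,5r$, and the multiplicity count — are routine, and all the geometric content sits in Lemma \ref{lma:kporcover}.
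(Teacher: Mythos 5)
Your proof is correct and follows essentially the same route as the paper's: uniformise the porosity scale to get a positive-measure set $A'$, use a density point to place a point of $A'$ inside every heavy packing ball, convert the measure-porosity holes into set-porosity of a finite point set (a ball of mass $>\eps\mu\bigl(B(x,5r)\bigr)$ cannot sit inside a hole of mass $\le\tfrac\eps2\mu\bigl(B(x,5r)\bigr)$), and finish with Lemma \ref{lma:kporcover} plus a bounded-multiplicity count. The only differences are cosmetic: the paper applies Lemma \ref{lma:kporcover} to the set of packing centres (translating the hole centres to restore orthogonality, getting porosity $\roo-2(1-2\roo)$ at scale $r$), whereas you apply it to the selected points $\xi_j\in A'$ at scale $3r$, where orthogonality is free, and then count how many $\xi_j$ fall in each covering ball; your separate case $\roo\le\tfrac27$ is harmless but not needed, since for small $\roo$ the covering conclusion of Lemma \ref{lma:kporcover} is trivially true.
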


\begin{proof}
Let $\eps > 0$ and take $r_0>0$ so that the set
\[
A_\eps = \{x \in A : \por_k(\mu, x, r, \eps/2) \ge \roo \textrm{ for all } 0 < r < r_0\}
\]
has positive $\mu$-measure. Now take a density point $x \in A_\eps$ and a radius $0 < r \le r_0/5$ for which
\begin{equation}\label{eq:densityradius}
\frac{\mu\bigl( A_\eps\cap B(x,5r) \bigr)}{\mu\bigl( B(x,5r) \bigr)} > 1- \eps.
\end{equation}
Let $\BB$ be a $((1-2\roo)r)$-packing of $B(x,r)$ so that $\mu(B) >
\eps\mu\bigl( B(x, 5r) \bigr)$ for all $B\in\BB$. Write $A_\BB$ for
the centres of the balls in $\BB$.
For each $B \in \BB$ choose $y \in A_\eps \cap B$. Because of \eqref{eq:densityradius},
such a point $y$ exists. A direct calculation using the $k$-porosity at $y$ on the
scale $r$ implies that
\[\por_k(A_\BB,x,r)\ge\roo-2(1-2\roo),\]
where $x$ is the centre of $B$.
Since this holds for all $x\in A_\BB$, Lemma \ref{lma:kporcover}
implies that $A_\BB$ may be covered by
$c\bigl(1-2\bigl(\roo-2(1-2\roo)\bigr)\bigr)^{k-d} = 5^{k-d}c(1-2\roo)^{k-d}$
balls of radius $5(1-2\roo)r$. Here $c = c(d)$ is the constant of Lemma
\ref{lma:kporcover}. It now follows that $\#\BB = \#A_\BB \leq 10^d5^{k-d}c(1-2\roo)^{k-d}$ yielding the claim.

It is important to note here that we are not
covering the set $A_\eps$ as it generally is
not even porous.
\end{proof}

\begin{proof}[Proof of Theorem \ref{thm:kpor}]
Let $1<c = c(d)<\infty$ be the constant of Lemma
\ref{lma:geometric} and let $0<\roo<\tfrac12$.
From the proof of Theorem \ref{thm:main} we observe that there
exists a constant $0<c_1 = c_1(d)<1$ so that for any $0 < m < s$
the choice $\delta_0 = c_{1}^{1/(m-s)}$ will suite as
$\delta_0=\delta_0=(m,s,5,N_d)$  in the claim of Theorem \ref{thm:main}.
Our aim is then to apply Theorem \ref{thm:main} with
\[
m = d - k + \frac{\log c}{-\log(1-2\roo)}, \quad s = m +
\frac{\log c_1}{\log(1-2\roo)},
\]
and $\delta = 1-2\roo$. Let $t = (m+s)/2$ and take
$M=M(N_d,\frac{1}{10})$ from Lemma \ref{thm:covering_thm}\eqref{covering4}. Here
$N_d$ is the doubling constant of $\R^d$.

Let $\delta_0 = \delta_0(m,s,N_d)$ be the constant in Theorem \ref{thm:main}.
Because we chose the parameters so that
\[
 \delta_0 \ge c_{1}^{\frac1{m-s}} = 1-2\rho = \delta,
\]
we may apply Theorem \ref{thm:main}.
Let $\eps$ to be the constant $\eps_0 = \eps_0(m,s,N_d,\delta)$ of Theorem \ref{thm:main}.

Proving the theorem now easily reduces to showing that
$\udimloc(\mu,x)\le s$ almost everywhere on the set
$A=\{y\in\R^d : \por_k(\mu,y)>\roo\}$. We may assume that
$\mu(A)>0$ since otherwise there is nothing to prove.
Suppose to the contrary that there exists a set $A' \subset A$
with positive measure such that $\udimloc(\mu,x)>s$ for all $x \in A'$.
Using Lemma \ref{lma:geometric}, we find a set $A_\eps \subset A'$
with $\mu(A_\eps)>0$ so that
\[
\limsup_{r \downarrow 0} \hom^{5}_{1-2\roo,\eps,r}(\mu,x) <
c(1-2\roo)^{-d+k} = (1-2\roo)^{-m}
\]
for all $x \in A_\eps$. Now Theorem \ref{thm:main} implies that
$\udimloc(\mu,x)\le s$ for $\mu$-almost all $x \in A_\eps$. This
contradiction finishes the proof.
\end{proof}

\begin{remark}
A measure $\mu$ is called $(\roo,p)$-mean $k$-porous at $x$ if for all
$\varepsilon>0$ and for all sufficiently large $n$, there are at least $pn$ values
$l\in\{1,\ldots,n\}$ with
$\por_k(\mu,x,2^{-l},\varepsilon)\ge\roo$. It follows from
\cite{RajalaSmirnov2007} that for any measure $\mu$ on $\R^d$, one has
$\udimloc(\mu,x)\le d-p-c(d)/\log(1-2\roo)$
for $\mu$-almost all $x\in\{y\in\R^d : \mu\text{ is
}(\roo,p)\text{-mean $1$-porous at }y\}$.
In light of Theorem \ref{thm:kpor} it is natural to ask whether this holds also for
mean $k$-porous measures: If $\mu$ is a measure on $\R^d$, $k \in \{ 1,\ldots,d \}$,
$0<\roo<1/2$, and $0<p<1$, is it true that \[\udimloc(\mu,x)\le d-pk-c/\log(1-2\roo)\]
for $\mu$-almost all $x\in\{y\in X : \mu\text{ is }(\roo,p)\text{-mean }k\text{-porous at }y\}$?
An affirmative answer to this question was recently obtained in \cite{SahlstenShmerkinSuomala2011}.
\end{remark}

\subsection{Porous measures on regular metric spaces}\label{metricporo}

If we consider $k$-porosity with $k=1$ there is no orthogonality
condition on the direction of holes. By replacing the Euclidean
distance $|x-y_1|$ by $d(x,y_1)$ in the definition, it makes
perfect sense to investigate $1$-porosity, which we simply call
porosity, in a general metric space $(X,d)$.

If $X$ is an $s$-regular metric space, then for any $A \subset X$ with
$\inf_{x \in A}\por_1(A,x) \ge \roo$, the packing dimension of $A$ is at most
$s-c\roo^s$; see \cite[Theorem 4.7]{JarvenpaaJarvenpaaKaenmakiRajalaRogovinSuomala2007}.
Recall that $X$ is $s$-regular if there exists a measure $\nu$ on $X$ and constants $a,b > 0$ so that
\begin{equation} \label{eq:reg_meas}
  ar^s \le \nu\bigl( B(x,r) \bigr) \le br^s
\end{equation}
for all $x \in X$ and $0<r\le \diam(X)$. Our result for measures in this direction is the following.

\begin{theorem}\label{thm:metricporo}
  If $X$ is an $s$-regular metric space and $\mu$ is a measure on $X$, then
  \begin{equation*}
    \udimloc(\mu,x) \le s-c\por_1(\mu,x)^s.
  \end{equation*}
  for $\mu$-almost all $x\in X$, where $c>0$ depends only on $s$ and the constants $a$ and $b$ of \eqref{eq:reg_meas}.
\end{theorem}

In the proof of Theorem \ref{thm:kpor}, we used a known estimate for
$k$-porous sets via a density point argument. In the proof of Theorem \ref{thm:metricporo} we will only be able to use Lemma \ref{lemma:weakdpp} as the density point property is not true in every $s$-regular metric space.
To prove Theorem \ref{thm:metricporo}, we
recall the following estimate from \cite[Corollary
4.6]{JarvenpaaJarvenpaaKaenmakiRajalaRogovinSuomala2007}.

\begin{lemma}\label{lma:porolemma}
If $X$ is an $s$-regular metric space with an $s$-regular measure $\nu$, then there exist constants $c_1,c_2,c_3 > 0$ depending only on $s$ and the constants $a$ and $b$ of \eqref{eq:reg_meas} that satisfy the following: If $x \in X$, $r_p>0$, $0 < r < c_3 \min\{ r_p,\diam(X) \}$, $A \subset B(x,r)$, and $\por_1(A,y,r') \ge \roo>0$
for all $y \in A$ and $0<r'<r_p$, then
\[
 \nu\bigl( A(r'') \bigr) \le c_1\nu\bigl( B(x,r) \bigr)\Big(\frac{r''}{r}\Big)^{c_2\roo^s}
\]
for all $0 < r'' < r$.
\end{lemma}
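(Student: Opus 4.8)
The plan is to reduce the asserted measure estimate to a covering estimate for $A$, and then to establish that by an iterated covering argument, in the spirit of the proof of Lemma \ref{lma:kporcover} and of \cite[Theorem 4.7]{JarvenpaaJarvenpaaKaenmakiRajalaRogovinSuomala2007}. Write $A(t)=\{y\in X:\dist(y,A)\le t\}$ for the closed $t$-neighbourhood and $N(E,t)$ for the least number of balls of radius $t$ covering $E$. First one chooses $c_3=c_3(s)$ so small that the hypothesis $0<r<c_3\min\{r_p,r_\nu\}$ forces $r$, and all radii up to $4r$ that appear below, to stay below $r_\nu$; this makes the two-sided bound $a_\nu t^s\le\nu(B(y,t))\le b_\nu t^s$ available for every $y\in X=\spt(\nu)$ and $t\le 4r$, and also $r<r_p$. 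Granted a bound of the form $N(A,t)\le C'(r/t)^{\,s-c_2\roo^s}$ for $0<t\le r$ with $C'=C'(s,a_\nu,b_\nu)$, the lemma follows at once: cover $A$ by $N(A,r'')$ balls of radius $r''$ and double them to cover $A(r'')$, so that
\begin{equation*}
  \nu\bigl(A(r'')\bigr)\le N(A,r'')\,b_\nu(2r'')^s\le C'b_\nu 2^s\, r^{\,s-c_2\roo^s}(r'')^{c_2\roo^s},
\end{equation*}
and substituting $r^s\le a_\nu^{-1}\nu(B(x,r))$ gives the claim with $c_1=C'b_\nu 2^s/a_\nu$.

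To prove the covering bound I would fix scales $t_j=\lambda^j r$ with a refinement ratio $\lambda$ equal to a small constant multiple of $\roo$ (the constant depending only on $s,a_\nu,b_\nu$, small enough that the counting below yields a genuine surplus of discarded balls), and argue by induction on $j$. The inductive step is the geometric heart of the matter: given a cover of $A$ by balls $B(z_i,t_j)$, fix $i$, choose $y\in A\cap B(z_i,t_j)$, and invoke $\por_1(A,y,t_j)\ge\roo$ (legitimate since $t_j<r_p$) to produce a hole $B(w_i,\roo t_j)$ with $A\cap B(w_i,\roo t_j)=\emptyset$ and $B(w_i,\roo t_j)\subset B(z_i,2t_j)$. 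Taking a maximal $t_{j+1}$-packing of $B(z_i,2t_j)$ and doubling it to a cover, one discards those balls lying inside the hole; the lower regularity of $\nu$ applied to $B(w_i,\roo t_j)$ shows that $\gtrsim\roo^s(t_j/t_{j+1})^s$ balls are discarded, while the upper regularity bounds the total by $\lesssim(t_j/t_{j+1})^s$, so $N(A\cap B(z_i,t_j),t_{j+1})$ is at most a fixed proportion $1-c\roo^s$ of the covering number of the full ball $B(z_i,2t_j)$, with $c=c(s,a_\nu,b_\nu)>0$. Summing over $i$ and iterating converts this per-scale gain into the exponent drop $c_2\roo^s$, since $(1-c\roo^s)^{\,j}=(r/t_j)^{-\log(1/(1-c\roo^s))/\log(1/\lambda)}$ and $\log(1/\lambda)$ differs from $\log(1/\roo)$ by a bounded amount; shrinking $c_2$ absorbs this logarithmic factor.

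The point where care is genuinely required — and the reason this is not a formality — is the bookkeeping of constants. The generic covering number of a ball in an $s$-regular space already carries a factor comparable to $b_\nu/a_\nu$, so a naive iteration would raise this to the $j$-th power and corrupt the exponent; the induction must instead be organised relatively, measuring the loss of $A$ against the covering behaviour of the ambient balls rather than against an absolute geometric bound, so that the regularity constant stays in the prefactor. A second, specifically metric, issue is that an arbitrary $s$-regular space offers no a priori room for holes; this is what $\spt(\nu)=X$ and the doubling property provide, guaranteeing $\nu(B(w_i,\roo t_j))\ge a_\nu(\roo t_j)^s$ and hence that the hole truly swallows a definite number of refining balls. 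The remaining steps — fixing $c_3$, tracking the factors of $2$ from neighbourhoods and from passing between packings and coverings, and checking that every radius in play lies below $r_\nu$ — are routine and parallel the computations already carried out in Lemma \ref{thm:covering_thm} and Lemma \ref{lma:kporcover}.
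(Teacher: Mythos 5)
The paper itself does not prove this lemma: it is recalled verbatim from \cite[Corollary 4.6]{JarvenpaaJarvenpaaKaenmakiRajalaRogovinSuomala2007}, so in effect you are reproving that result from scratch. Your reduction of the measure bound to a covering bound for $A$ is fine, but the iteration you propose does not give the stated exponent, and the part you dismiss as bookkeeping is exactly where it fails. Concretely: with refinement ratio $\lambda$ comparable to $\roo$, the number of admissible scales between $r$ and $r''$ is only $j\approx\log(r/r'')/\log(1/\lambda)\approx\log(r/r'')/\log(1/\roo)$, so even granting a clean per-step relative loss $1-c\roo^s$, your own formula yields the exponent $\log\bigl(1/(1-c\roo^s)\bigr)/\log(1/\lambda)\asymp\roo^s/\log(1/\roo)$. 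The lemma requires $c_2\roo^s$ with $c_2$ depending only on $s,a_\nu,b_\nu$, uniformly in $\roo$; since $\roo^s/\log(1/\roo)=o(\roo^s)$ as $\roo\downarrow0$, no constant $c_2$ absorbs the factor $1/\log(1/\roo)$ --- what is bounded is only the discrepancy $\log(1/\lambda)-\log(1/\roo)$, not $\log(1/\roo)$ itself. This is not a cosmetic loss: in the application (Theorem \ref{thm:metricporo}) the porosity parameter is arbitrary, so your argument would only deliver $\udimloc(\mu,x)\le s-c\,\roo^s/\log(1/\roo)$, not the stated theorem.

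Second, the ``relative organisation'' that is supposed to keep the regularity constants out of the exponent is asserted but never carried out, and within a covering-number induction it is unclear it can be: the number of $t_{j+1}$-balls needed for $B(z_i,2t_j)$ genuinely exceeds $(t_j/t_{j+1})^s$ by a factor $K\asymp b_\nu/a_\nu$ (times packing constants), and multiplying counts over scales exponentiates $K$. The natural repair is to abandon covering numbers and iterate the measures $\nu\bigl(A(t_j)\bigr)$ directly, comparing $\nu\bigl(A(t_{j+1})\bigr)$ with $\nu\bigl(A(Ct_j)\bigr)$ by placing pairwise disjoint holes inside a disjoint packing of $A$, so that the regularity and overlap constants enter only the per-step proportion and the prefactor $c_1$. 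But even after this repair one is stuck at $\roo^s/\log(1/\roo)$, because a hole of radius $\roo t_j$ survives the passage to the $t_{j+1}$-neighbourhood only if $t_{j+1}\lesssim\roo t_j$, which forces the $\roo$-dependent step ratio. Reaching the clean exponent $c_2\roo^s$ requires exploiting the holes available at \emph{every} scale rather than one hole per block of roughly $\log(1/\roo)$ scales; that finer argument is the content of \cite[Corollary 4.6]{JarvenpaaJarvenpaaKaenmakiRajalaRogovinSuomala2007}, and it is precisely why the present paper quotes that result instead of reproving it.
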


Now we are ready to prove Theorem \ref{thm:metricporo}.

\begin{proof}[Proof of Theorem \ref{thm:metricporo}]
  Let $\nu$ be an $s$-regular measure on $X$ with $\spt(\nu)=X$ and
  let the constants $c_1,c_2,c_3>0$ be as in Lemma
  \ref{lma:porolemma}. Let $0<\roo<\tfrac12$ and choose $\delta' > 0$ so small that $\log(c_1b/a)/\log(1/\delta) < (a c_2 \roo^s)/(4^s b)$ for all $0 < \delta \le \delta'$. We are going to apply Theorem \ref{thm:main} with
  \begin{equation*}
    m' = s - \frac{c_2a}{2b}(\roo/4)^s + \frac{\log(c_1b/a)}{-\log\delta'}, \quad
    s' = s - \frac{c_2a}{4b}(\roo/4)^s,
  \end{equation*}
  and $0 < \delta < \min\{ 1,\roo \diam(X)/2,\delta',\delta_0 \}$, where $\delta_0 = \delta_0(m',s',N,10) > 0$ is as in Theorem \ref{thm:main}. Let $\eps > 0$ be the constant $\eps_0 = \eps_0(m',s',N,\delta) > 0$ from Theorem \ref{thm:main}.

  It is clearly sufficient to prove that we have $\udimloc(\mu,x) \leq s-c\roo^s$ for almost all $x\in A_\eps$, where
  \begin{equation*}
    A_\eps = \{ x \in X : \por_1(\mu,x,r,\eps/2) \ge \roo \text{ for all } 0 < r < r_0 \}.
  \end{equation*}
 We note that $A_\eps$ is a Borel set. (A careful inspection of the
 definitions shows that it is in fact closed.) Let $x\in A_\eps$ be such that
 \[
  \lim_{r \to 0}\frac{\mu\bigl(B(x,r) \setminus A_\eps \bigr)}{\mu\bigl( B(x,5r) \bigr)} = 0.
 \]
 Recall that by Lemma \ref{lemma:weakdpp} this is true for $\mu$-almost every $x \in A_\eps$

 Take
  $0 < r < \min\{ 1,r_0/8\}$ so small that
  \begin{equation} \label{eq:density2}
    \frac{\mu\bigl(B(x,2r) \setminus A_\eps \bigr)}{\mu\bigl( B(x,10r) \bigr)} < \eps.
  \end{equation}
  Our goal is to show that for any $(\delta r)$-packing $\BB$ of
  \begin{equation*}
    A = \bigl\{ y \in B(x,r) : \mu\bigl( B(y,\delta r) \bigr) > \eps\mu\bigl( B(x,10r) \bigr) \bigr\}
  \end{equation*}
  the set $A_\BB = \{ y \in A : y \text{ is the centre point of some }
  B \in \BB \}$ satisfy the assumptions of Lemma
  \ref{lma:porolemma}. Using Lemma \ref{lma:porolemma}, we are
  able to estimate the cardinality of $\BB$ and hence also
  $\hom^{10}_{\delta,\eps,r}(\mu,x)$. The desired upper bound for
  $\udimloc(\mu,x)$ then follows from Theorem \ref{thm:main}.

  Fix a $(\delta r)$-packing $\BB$ of $A$ and $y \in A_\BB$. Assume
  first that $0 < r' < 2\delta r/\roo$. If $B(y,\roo'r/4) \setminus
  B\bigl( y,(\frac{a}{2b})^{1/s}\roo r'/4 \bigr) = \emptyset$,
  then it follows from the $s$-regularity of $\nu$ that
  \begin{equation*}
    a(\roo r'/4)^s \le \nu\bigl( B(y,\roo r'/4) \bigr) = \nu\bigl( B(y,(\tfrac{a}{2 b})^{1/s}\roo r'/4) \bigr) \le \tfrac{a}{2} (\roo r'/4)^s
  \end{equation*}
  which is impossible. Hence there exists a point $z \in
  B(y,\roo r'/4) \setminus B\bigl(
  y,(\tfrac{a}{2b})^{1/s}\roo r'/4 \bigr)$. Since $\roo
  r'/4<\delta r$, we have $A_\BB \cap B\bigl(
  z,(\tfrac{a}{2b})^{1/s}\roo r'/4 \bigr) = \emptyset$ and as
  $(\frac{a}{2b})^{1/s} \roo r'/4 + d(y,z) \le \roo r'/2 <
  r'$, it follows that $\por_1(A_\BB,y,r') \ge (\frac{a}{2b})^{1/s} \roo /4$ for all $0<r'<2\delta r/\roo $.

  Let us next assume that $2\delta r/\roo  \le r' \le 8r$. If $A_\eps \cap B(y,\delta r) = \emptyset$, then \eqref{eq:density2}
  and the definition of $A$ would imply that
  \begin{equation*}
    \mu\bigl( B(y,\delta r) \bigr) \le \mu\bigl( B(x,2r) \setminus A_\eps \bigr) < \eps\mu\bigl( B(x,10r) \bigr) \le \mu\bigl( B(y,\delta r) \bigr).
  \end{equation*}
  Hence there must be a point $z \in A_\eps \cap B(y,\delta r)$. The definition of $A_\eps$ in turn guarantees the existence of a point $w \in X$ such that $\mu\bigl( B(w,\roo r') \bigr) \le \frac{\eps}{2}\mu\bigl( B(z,r') \bigr)$ and $\roo r' + d(z,w) \le r'$. Now
  \begin{equation*}
    \roo r'/2 + d(y,w) \le \roo r'/2 + d(y,z) + d(z,w) \le \roo r' + d(z,w) \le r'
  \end{equation*}
  and $A_\BB \cap B(w,\roo r'/2) = \emptyset$ because for any $w' \in B(w,\roo r'/2)$ we
have $\mu\bigl( B(w',\delta r) \bigr) \le \mu\bigl( B(w,\roo r') \bigr) \le \frac{\eps}{2}\mu\bigl( B(z,r') \bigr) < \eps\mu\bigl( B(x,10r) \bigr)$, as $B(z,r')\subset B(x,10r)$.
Therefore $\por_1(A_\BB,y,r') \ge \roo/2$ for $2\delta r/\roo\le r'\le 8r$ and
consequently, for $2\delta r/\roo \le r' \le 4(\frac{b}{a})^{1/s}r$ we have
$\por_1(A_\BB,y,r') \ge  \min\{1, 2(\frac{a}{b})^{1/s}\}\roo/2$.

Now let $4(\frac{b}{a})^{1/s}r < r' < \diam(X)$ and put
$t=\frac14(\frac{a}{b})^{1/s}r'+2r$. Then
$t<\frac34(\frac{a}{b})^{1/s}r'$ and thus
\begin{equation*}
\nu\bigl( B(y,t) \bigr) \le b t^s<a \bigl(\tfrac{3r'}{4}\bigr)^s \le \nu\bigl( B(y,\tfrac{3}{4}r') \bigr).
\end{equation*}
So there exists $w\in B(y,\frac34 r')\setminus B(y,t)$. Now
$A_\BB \cap B\bigl( w,\frac14 (\frac{a}{b})^{1/s}r'\bigr) \subset
B(x,r) \cap B\bigl( w,\frac14 (\frac{a}{b})^{1/s}r' \bigr) = \emptyset$ and thus $\por_1(A_\BB,y,r') \ge \frac{1}{4}(\tfrac{a}{b})^{1/s}$.

  Putting the three estimates together, we have
  \begin{equation*}
    \por_1(A_\BB,y,r') \ge (\tfrac{a}{2b})^{1/s} \roo/4
  \end{equation*}
  for all $y \in A_\BB$ and $0<r'<\diam(X)$. We can now use Lemma \ref{lma:porolemma} to conclude
  \begin{equation*}
    \# \BB a(\delta r)^s \le \sum_{B \in \BB} \nu(B) = \nu\bigl( A_\BB(\delta r) \bigr) \le c_1\nu\bigl( B(x,r) \bigr) \delta^{\frac{c_2a}{2b}(\roo/4)^s} \le c_1b r^s \delta^{\frac{c_2a}{2b}(\roo/4)^s}
  \end{equation*}
  for all $0<r<c_3\diam(X)$. Since this is true for all $(\delta
  r)$-packings $\BB$ of $A$, and \eqref{eq:density2} is true for all
  small $r>0$, we get
  \begin{equation*}
    \limsup_{r \downarrow 0} \hom^{10}_{\delta,\eps,r}(\mu,x) \le \tfrac{c_1b}{a} \delta^{\frac{c_2a}{2b}(\roo/4)^s - s} < \delta^{-m'}
  \end{equation*}
  for $\mu$-almost every $x \in A_\eps$. Therefore, by Theorem
  \ref{thm:main}, we have $\udimloc(\mu,x) \le s'=s -
  \frac{c_2a}{4b}(\roo/4)^s$ for $\mu$-almost every $x \in
  A_\eps$. This completes the proof.
\end{proof}

\begin{remark}
Independently of our work, based on probabilistic ideas introduced in
\cite{HochmanShmerkin2012}, it was recently proved in \cite{Shmerkin2012} that
$\udimloc(\mu,x)\le d-c(d)p\roo^d$
for $\mu$-almost all $x\in\{y\in\R^d : \mu$ is $(\roo,p)$-mean $1$-porous at $y\}$
for measures in $\R^d$. It is natural to ask whether an analogous estimate
  \[\udimloc(\mu,x)\le s-cp\roo^s\] for $\mu$-almost all
  $x\in\{y\in X : \mu\text{ is
}(\roo,p)\text{-mean $1$-porous at }y\}$
is valid
in the setting of Theorem \ref{thm:metricporo} with a constant $c>0$ depending only on the $s$-regularity data.
This remains an open problem.
\end{remark}

\section{Examples and further remarks} \label{sec:examples}

So far in this article we have studied the relations between the local versions of $L^q$-spectrum, dimension and homogeneity,
and shown how these concepts can be used in estimating the dimension of measures.
Below, we give few straightforward
examples of situations where the local $L^q$-dimension seem to be
more reasonable than the global one. In \cite{KaenmakiRajalaSuomala2012b}, we show how the local $L^q$-spectrum can be used to develop local multifractal formalism.

In Examples \ref{ex:eka}--\ref{ex:q>h}, we use the fact that the $L^q$-spectrum
can be defined using the dyadic cubes. For the global spectrum this is well known
in the Euclidean setting and it is easy to see that this remains valid for the
local spectrum. For a detailed proof in the general metric setting we refer to \cite{KaenmakiRajalaSuomala2012b}.

\begin{example}\label{ex:eka}
We construct a probability measure $\mu$ on $\R^d$ so that for all
$0\le q<1$ we have
$\dim_q(\mu)=d$ while $\dim_q(\mu,x)=0=\dimloc(\mu,x)$ for
$\mu$-almost all $x\in\R^d$.

Our measure $\mu$ will be a countable sum of weighted Dirac
measures on $[0,1]^d$. Let us denote by $\QQ^n$ the dyadic subcubes of
$[0,1]^d$ of side-length $2^{-n}$.
At step 1, we let $n_1=1$ and attach a point mass of
size $2^{-d}$ to the centre point of all but one dyadic subcubes of
$[0,1)^d$ in
$Q\in\QQ^1$. Let $Q_1\in\QQ^1$ be the one remaining cube of measure
$2^{-d}$. At step 2 we choose a large integer $n_2\in\N$ and attach
a point mass of magnitude $2^{-n_2d}\mu(Q_1)$ to all but one of its
dyadic subcubes in $\QQ^{n_1+n_2}$. We continue inductively, at the
$k:$th stage we choose the one remaining cube
$Q_{k-1}\in\QQ^{n_1+\cdots+n_{k-1}}$, choose a large integer $n_{k}$ and
attach  a point mass of size $2^{-n_{k}d}\mu(Q_{k-1})$ to the centre
points of all but one dyadic subcubes of $Q_{k-1}$ in the collection
$\QQ^{n_1+\cdots+n_{k}}$.

At the $k$:th stage we have for all $0<q<1$ that
\begin{align*}
  \frac{1}{\log 2^{-n_k}} \log&\sum_{Q \in \QQ^{n_k}} \mu(Q)^q
  \le \frac{1}{\log 2^{-n_k}} \log \biggl(
  \sum_{\atop{Q \in \QQ^{n_k}}{Q\subset Q_{k-1}}}
  \mu(Q)^q \biggr) \\ &=\frac{\log\bigl(2^{n_k d(1-q)}\mu(Q_{k-1})^q\bigr)}{\log
    2^{-n_k}} = (q-1)d+q\,\frac{\log\mu(Q_{k-1})}{\log 2^{-n_k}}\,.
\end{align*}
Thus, choosing the numbers $n_k$ large enough,
we can ensure that
\[\tau_q(\mu)=\liminf_{n\to\infty}\frac1{\log 2^{-n_k}}\log\sum_{Q \in \QQ^{n_k}}
  \mu(Q)^q\le (q-1)d.\]
On the other hand, it is well known and easy to see that
$\tau_q(\mu)\ge(q-1)d$ for all measures $\mu$ on $\R^d$ with bounded
support; see Lemma \ref{tauprop}(2)). Therefore it follows that $\dim_q(\mu)=d$.
Furthermore, it is clear from the construction that
$\tau_q(\mu,x)=\dim_q(\mu,x)=\dimloc(\mu,x)=0$ for
$\mu$-almost all $x\in\R^d$.
\end{example}

\begin{example}\label{ex:toka}
If $\mu$ is the sum of a Dirac point mass at the origin and the
Lebesgue measure on the unit cube of $\R^d$, we see that
$\dim_q(\mu)=0$ whereas $\dim_q(\mu,x)=d=\dimloc(\mu,x)$ for all $q>1$ and all
$x\in[0,1]^d\setminus\{0\}$.
\end{example}

In $\R^d$, the $L^q$-spectrum estimates can be used directly to gain information on the
dimension of porous measures, but the results obtained this way are somewhat weaker
than the results obtained from the local homogeneity estimates in
\S \ref{sec:kpor} above; see Remark \ref{pororem}(3). One
motivation for investigating the local $L^q$-spectrum in metric spaces was to
find out, which of these two methods, if any, is stronger. Also,
in the view of Theorems \ref{thm:homodim} and \ref{thm:pointwise_dimq}, it
is interesting to compare $\dimhomo^\Lambda(\mu,x)$ to $\lim_{q \uparrow 1}
\dim_q(\mu,x)$. In the following two examples we show that there is no general relationship between these two values. We
present the examples in $\R$ but similar constructions work in any
dimension.
The first example also shows that a measure may have large homogeneity
even if it is of packing dimension zero.

\begin{example}\label{ex:h>q}
We construct an example in $\R$ so that $\lim_{q \uparrow 1}
\dim_q(\mu,x)=0$ while $\dimhomo(\mu,x)=1$ for $\mu$-almost all $x\in\R$.
The idea is to apply a construction resulting to a zero dimensional
measure on a Cantor set. The large homogeneity is obtained by
performing infinitely many (but extremely seldom so that it does not
affect the value of $\dim_q$) construction steps
where the measure is distributed almost uniformly inside the
construction intervals of that level.

We first pick a sequence
$0<\varepsilon_i\downarrow 0$ and then choose integers
$m_i,n_i\rightarrow\infty$ so that
\begin{align}
\frac{k+\sum_{j=1}^k
     m_j}{\sum_{j=1}^k
     (n_j+m_j)}<\varepsilon_k\label{nk}
\end{align}
for all $k\in\N$.
In the first step of the construction, we put
$\mu([0,2^{-n_1}]) = \mu([1-2^{-n_1},1]) = \tfrac12$. Then we
divide both intervals $[0,2^{-n_1}]$ and $[1-2^{-n_1},1]$ into
$2^{m_1}$ dyadic
subintervals of length $2^{-n_1-m_1}$ each getting $2^{-m_1}$ portion
of their parent's measure.

We continue the construction inductively. In the $k$:th step, we perform
the first step construction inside each of the construction
intervals of level $k$ just by replacing $n_1$ and $m_1$ with $n_k$ and $m_k$,
respectively.

As $m_k\rightarrow\infty$ it is clear that
$\hom_\delta^\Lambda(\mu,x)\approx\tfrac1\delta$ for all $x\in\spt(\mu)$ and
all small $\delta>0$. Thus $\dim_{\hom}^\Lambda(\mu,x)=1$ for all
$x\in\spt(\mu)$. On the other
hand, it follows easily from \eqref{nk}, that
$\tau_q(\mu,x)=\dim_q(\mu,x)=0=\dim_q(\mu)$ for
all $x\in\spt(\mu)$ and $0<q<1$.
\end{example}

\begin{example}\label{ex:q>h}
We construct an example in $\R$ so that $\lim_{q \uparrow 1}
\dim_q(\mu,x)=1$ but $\dim_{\hom}^\Lambda(\mu,x)=0$ for $\mu$-almost all
$x\in\R$. The idea is to perform a Cantor type construction resulting
to a zero dimensional measure, but add ``one-dimensional''
perturbation which affects only a dense set of measure zero, but
nevertheless, guarantees that the $\dim_q(\mu,x)$ is large for all
$x\in\spt(\mu)$.

Fix numbers $0<q_k\uparrow 1$ and
integers $n_k, l_k\in\N$ so that $n_k\rightarrow\infty$ and
$\sum_{k=1}^\infty 2^{-l_k}<\infty$.
In what follows, we choose a sequence of integers $m_k\rightarrow\infty$.
First of these, $m_1$, is taken so that
\[\frac{m_1(1-q_1)-l_1q_1}{n_1l_1+m_1}>\tfrac12(1-q_1).\]
The numbers $m_2,m_3,\ldots$ will be defined inductively below.

We begin the step $1$ of the construction by setting
$\mu([0,2^{-n_1}]) = \mu([1-2^{-n_1},1]) = \tfrac12$. Iterating this
in a self-similar manner for $l_1$ steps, we get
$2^{l_1}$ dyadic subintervals of $[0,1]$ of length $2^{-n_1 l_1}$ each
of measure $2^{-l_1}$.
We choose one of these intervals, say $I$, and
divide it into $2^{m_1}$ dyadic subintervals of length $2^{-m_1}|I|$ and
of measure $2^{-m_1}\mu(I)$. Inside the other $2^{l_1}-1$ construction
intervals of
length $2^{-n_1 l_1}$ we choose just the outermost subintervals of
length $2^{-l_1 n_1-m_1}$ and let both of these intervals have the
same measure (half of the measure of their parent).

In the beginning of the step $k$, $k\geq 2$, we have some amount, say
$I_1,\ldots,I_{N_k}$ dyadic intervals of equal length, denoted $2^{-M_k}$.
We perform the step $1$ construction inside each of these intervals, but
replace $n_1$, $l_1$, and $m_1$ by $n_k$, $l_k$, and $m_k$, respectively.
We choose $m_k$ so large that for each $I=I_j$, the
dyadic subintervals $J_i$ of $I$ of size $2^{-M_k-n_kl_k-m_k}$ chosen
in the construction satisfy
\begin{equation*}
\frac{\log\bigl(\sum_i\mu(J_i)^{q_k})\bigr)}{\log(2^{M_k+n_kl_l+m_k})} \ge \frac{\log\bigl(2^{m_k(1-q_k)}\bigl(2^{-l_k}\mu(I)\bigr)^{q_k}\bigr)}{\log(2^{M_k+n_kl_k+m_k})} > \frac{k}{k+1}(1-q_k).
\end{equation*}
The former estimate is obtained by summing over the range of intervals
where the measure was distributed uniformly.
As $q_k\uparrow 1$, we clearly get $\lim_{q\uparrow
   1}\dim_q(\mu,x)\geq 1$ for all $x\in\spt(\mu)$. On the other hand, as
$n_k\rightarrow\infty$, and $\sum_{k}2^{-l_k}<\infty$,
it follows that for $\mu$-almost all $x\in\R$, we have
$\hom_\delta^\Lambda(\mu,x)\leq C$ for all $0<\delta<1$ with some universal
constant $C>0$. Thus, in particular, $\dim_{\hom}^\Lambda(\mu,x)=0$ for almost all $x$.
\end{example}

\begin{remark}\label{rem:strict}
(1) From the previous example, it
follows that a strict inequality $\udimloc(\mu,x) < \lim_{q \uparrow
   1} \dim_q(\mu,x)$ is possible almost everywhere in Theorem
\ref{thm:pointwise_dimq}. We note that also
\begin{equation}\label{qdimeq}
    \lim_{q \downarrow 1} \dim_q(\mu,x) < \ldimloc(\mu,x)
  \end{equation}
is possible in a set of positive measure. A simple example is given by
letting $\mu=\mathcal{L}^1|_{[0,1]}+\sum_{n\in\N}2^{-n}\delta_{q_n}$
where $\mathcal{L}^1$ is the Lebesgue measure and $\{q_1,q_2,q_3,\ldots\}$
is dense in $[0,1]$. In order to get an
example where \eqref{qdimeq} holds almost everywhere, one can use a
similar idea as in Example \ref{ex:q>h} but this time one has to
construct a one dimensional measure with a dense zero dimensional
perturbation.

(2) We note that also the other inequalities in
  Theorem \ref{thm:pointwise_dimq} can be strict. For instance, see
  \cite[Proposition 3.1]{BatakisHeurteaux2002}.
\end{remark}

We finish the article by constructing a doubling metric space in which the density point property
does not hold. This space is then further modified in Examples \ref{ex:nodpp1} and \ref{ex:nodpp2}
to show that the inequalities in Theorem \ref{thm:entropy} may fail in a set of positive measure
without the density point property; see Remark \ref{rem:dppneeded}(2).

\begin{example} \label{ex:nodpp0}
Let $N_n$ be a sequence of integers and set $I_n = \{ 0,\ldots,N_n \}$.
We define an auxiliary function $f \colon \left(\N\cup\{0\}\right)^2\to [0,\infty)$ by
  setting
  \begin{equation}\label{eq:def_f}
    f(i,j) = f(j,i) =
    \begin{cases}
      0, &\text{if } i=j, \\
      2^{-i}, &\text{if } i \neq 0 \text{ and } j=0, \\
      (2^{-i}+2^{-j}), &\text{if } i,j \neq 0\text{ and }i \ne j.
    \end{cases}
  \end{equation}
We now set $\Sigma = \prod_{n=1}^\infty I_n$ and denote its elements by
$\iii = i_1i_2\cdots$, $\jjj = j_1j_2\cdots$, and so on. We
also denote $\Sigma_0=\{\varnothing\}$ and $\Sigma_n = \prod_{j=1}^n I_j$ for all $n \in \N$.
If $\iii\in \Sigma$ and $n \in \N$, then we let $\iii|_n = i_1\cdots i_n
\in \Sigma_n$. For $n \in \N$ and $\iii \in \Sigma_n$ we denote
$[\iii] = \{ \jjj \in \Sigma : \jjj|_n = \iii \}$. If $\iii,\jjj \in
\Sigma$ so that $\iii \ne \jjj$, then we let $\iii\wedge\jjj$ denote their longest common beginning. Let $|\iii|$ denote the length of a word $\iii$ (with the convention $|\varnothing|=0$) and $\iii\jjj$ the concatenation of two words $\iii,\jjj$ with $|\iii| < \infty$.

Let $\varepsilon_\varnothing=1$ and for $\iii\in\bigcup_n\Sigma_n$, let
\begin{equation}\label{metric}
\begin{cases}
0<\varepsilon_{\iii 0}\le 2^{-N_n}\varepsilon_\iii\,,\\
0<\varepsilon_{\iii i}\le 2^{-i}\varepsilon_\iii\text{ if }0\neq i\in I_{n+1}.
\end{cases}
\end{equation}
With these parameters we now define a distance $e \colon
  \Sigma \times \Sigma \to [0,\infty)$ on $\Sigma$ by setting
  \begin{equation*}
    e(\iii,\jjj) =
    \begin{cases}
      0, &\text{if } \iii,\jjj \in \Sigma \text{ so that } \iii = \jjj, \\
      \varepsilon_{\iii\wedge\jjj}f(i_{|\iii\wedge\jjj|+1},j_{|\iii\wedge \jjj|+1}), &\text{if }
      \iii,\jjj \in \Sigma \text{ so that } \iii \ne \jjj.
    \end{cases}
  \end{equation*}
This is indeed a distance: the triangle inequality follows easily from
\eqref{metric} and the definition of $f$.

Let us next show that $\Sigma$ is doubling. For this, we
choose $\iii\in\Sigma$, $0<r<\diam(\Sigma)\le 1$ and fix $n$ so that
$\eps_{\iii|_{n+1}}\leq r<\eps_{\iii|_n}$.
We also choose $k\in\N$ so
that $2^{-k}\eps_{\iii|_n}\leq r<2^{-k+1}\eps_{\iii|_n}$. If $k>1$, we get
$B(\iii,2r)\subset B(\iii,r)\cup B(\iii_0,r)\cup B(\iii_1,r)$, where
$\iii_0=i_1\cdots i_{n}0i_{n+2}\cdots$ and
$\iii_1=i_1\cdots i_{n}(k-1)i_{n+2}\cdots$. If $k=1$, then
$B(\iii,2r)\subset B(\iii,r)\cup B(\iii_2,r)\cup B(\iii_3,r)$, where
$\iii_2=i_1\cdots i_{n-1}0i_{n+1}\cdots$ and $\iii_3=i_1\cdots i_{n-1}N_{n}i_{n+1}\cdots$. In any case, we see
that $\Sigma$ is doubling with a doubling constant $3$.

To finish the construction, fix $N_n=n^3$ and let $\mu$ be a probability measure on $\Sigma$ that satisfies
\begin{equation}\label{mudef}
\begin{split}
      \mu([\iii0])  &= n^{-2}\mu([\iii]), \\
      \mu([\iii j]) &= N_{n}^{-1}(1-n^{-2})\mu([\iii])\,,
\end{split}
\end{equation}
for all $j \in \{1,\ldots, N_n\}$, $\iii\in\Sigma_{n}$, and $n \ge 2$.
If $A = \{ \iii\in\Sigma : i_j\ne 0 \text{ for all } j\in\N\}$, then
$\mu(A)>0$ since $\prod_{n=2}^\infty(1-n^{-2})>0$.

  Let $\iii\in A$ and define $r_{\iii,n} =
  \eps_{\iii|_{n}} 2^{-i_{n+1}}$ for all $n \in \N$. For each $\iii\in A$ it
  follows that $B(\iii,r_{\iii,n}) =
  [\iii|_{n+1}] \cup [\iii']$ for all $n \in \N$, where $\iii' =
  i_1\cdots i_{n}0 \in \Sigma_{n+1}$. Thus we get
\begin{equation}\label{mudpp}
\begin{split}
  \frac{\mu\bigl( A \cap B(\iii,r_{\iii,n}) \bigr)}{\mu(B(\iii,r_{\iii,n}))} &\le \frac{\mu([\iii|_{n+1}])}{\mu([\iii|_{n+1}])+\mu([\iii'])} \\
  &= \frac{N_{n}^{-1}(1-n^{-2}) \mu([\iii_{n}])}{\bigl(N_{n}^{-1}(1-n^{-2})+n^{-2}\bigr) \mu([\iii_{n}])} = \frac{1-n^{-2}}{1-n^{-2}+n}.
\end{split}
\end{equation}
In particular, as $n\rightarrow\infty$, we see that the density point property is not valid for $\mu$.
\end{example}

\begin{example}\label{ex:nodpp1}
In this example, we modify the previous example to obtain $\udim_1(\nu,\iii)>\udimloc(\nu,\iii)$ in a set of
positive measure. We continue with the same notation as in Example \ref{ex:nodpp0}. The space $\Sigma$ is
modified by gluing infinitely many small metric spaces into $A$:
Denote by $S$ the collection of all finite words $\iii\in\bigcup_{n=0}^\infty\Sigma_n$ that contain no zeros.
For each $\iii\in S$, let $(X_\iii,d_\iii)$ be a doubling metric space with diameter at most $\diam_e([\iii0])$
and with a uniform doubling constant (independent of $\iii$). Let $X=A\cup\bigcup_{\iii\in S}X_\iii$ and define
a distance $d$ on $X$ by
  \begin{equation} \label{eq:d_def}
    d(x,y) =d(y,x)=
    \begin{cases}
      e(x,y), &\text{if }x,y\in A,\\
      d_\iii(x,y), &\text{if }x,y\in X_{\iii},\\
      e(x,\iii000\cdots), &\text{if }x\in A\text{ and } y\in X_\iii,\\
      e(\iii000\cdots,\jjj000\cdots), &\text{if } x\in X_\iii,\, y\in X_\jjj\text{ and }\iii\neq\jjj.
     \end{cases}
  \end{equation}
Since $\diam_{d_\iii}(X_\iii)\le\diam_e([\iii0])$ and the doubling constant of $X_\iii$ is uniformly bounded,
it is readily checked that $(X,d)$ is a doubling metric space.

If $\mu$ is a measure on $\Sigma$ and $\nu_\iii$ are measures on $X_\iii$ with $\nu_\iii(X_\iii)=\mu([\iii0])$,
we define a measure $\nu$ on $X$ by setting
\begin{equation}
\nu=\mu|_A+\sum_{\iii\in S}\nu_\iii\,.
\end{equation}
Then $\nu|_A=\mu|_A$ and $\nu(X)=\mu(\Sigma)$.
Moreover, since $X_{\iii|_n}\subset B_{X}(\iii,r_{\iii,n}))$, $\nu(B_X(\iii,r_{\iii,n}))=\mu(B_\Sigma(\iii,r_{\iii,n}))$,
and $\nu(X_{\iii|_n})=\mu([\iii'])$, \eqref{mudpp} yields
\begin{equation}\label{eq:dppfails}
\frac{\nu(X_{\iii|_n})}{\nu(B(\iii,r_{\iii,n}))}\longrightarrow 1,
\end{equation}
as $n\rightarrow\infty$ for all $\iii\in A\subset X$.

We now specify $X_\iii$ and $\nu_\iii$: Let $X_\iii$ be a Euclidean interval of length $\diam_\Sigma([\iii0])$
and let $\nu_\iii$ be the length measure on $X_\iii$ normalized so that $\nu_\iii(X_\iii) = \mu([\iii 0])$. Then
\begin{equation*}
\lim_{\delta \downarrow 0} \int_{X_\iii} \frac{\log\nu\bigl( B(y,\delta) \bigr)}{\log\delta}\,d\nu(y)=\nu(X_\iii)
\end{equation*}
and combined with \eqref{eq:dppfails}, this yields
\begin{equation*}
\udim_1(\nu,\iii)\ge\lim_{n\rightarrow\infty}\limsup_{\delta \downarrow 0} \fint_{B(\iii,r_{\iii,n})} \frac{\log\nu\bigl( B(y,\delta) \bigr)}{\log\delta}\,d\nu(y) \ge 1.
\end{equation*}

All the above is valid for any choice of the $\varepsilon_\iii$, and by choosing them small enough, we
can easily guarantee that $\udimloc(\nu,\iii)=0$ for all $\iii\in A$. This proves that the latter estimate
of Theorem \ref{thm:entropy} may fail if the density point property is not satisfied.
\end{example}

\begin{example}\label{ex:nodpp2}
In this example, we modify the above examples to show that the density point property is needed also for the
first estimate of Theorem \ref{thm:entropy}. To obtain $\ldim_1(\nu,\iii)=0$ for $\iii\in A$ we simply can
replace the glued pieces $X_\iii$ in the previous example by singletons. But since we simultaneously want
$\ldimloc(\nu,\iii)>0$, we have to modify the construction such that on most scales, the measure $\nu$ is
very uniformly distributed.

Let $k_n$ be a strictly increasing sequence of integers and $J=\{k_n\,:\,n\in\N\}$. Let $N_{k_n}=n^3$ and
$N_n=2$ if $n\notin J$. For $n\in J$, let $I_n=\{0,\ldots, N_n\}$ and for $n\in\{0,1,2,\ldots\}\setminus J$,
let $I_n=\{1,2\}$ (so that $0\in I_n$ if and only if $n\in J$). Define $\Sigma$ as in Example \ref{ex:nodpp0}
and for $\iii\in\Sigma_n$, let
\begin{equation}\label{onmetriikkaa}
\begin{cases}
\varepsilon_{\iii 0}= 2^{-N_n}\varepsilon_\iii,\\
\varepsilon_{\iii i}= 2^{-i}\varepsilon_\iii\text{ for }0\neq i\in I_{n+1},
\end{cases}
\end{equation}
if $n\in J$ and
\begin{equation}\label{onmetriikkab}
\varepsilon_{\iii1}=\varepsilon_{\iii2}=\frac{\varepsilon_\iii}2
\end{equation}
otherwise.

Define a distance $e$ on $\Sigma$ by $e(\iii,\iii)=0$, and for $\iii\neq\jjj$, let
 \begin{equation*}
    e(\iii,\jjj) =
      \varepsilon_{\iii\wedge\jjj}f(i_{|\iii\wedge\jjj|+1},j_{|\iii\wedge \jjj|+1}),
  \end{equation*}
provided that $|\iii\wedge\jjj|\in J$ ($f$ is as in \eqref{eq:def_f}) and
\begin{equation*}
e(\iii,\jjj)=\frac{\varepsilon_{\iii\wedge\jjj}}{2}
\end{equation*}
otherwise. Again, it is a direct consequence of \eqref{eq:def_f} and \eqref{onmetriikkaa}--\eqref{onmetriikkab}
that $e$ is a distance.

Let $\mu$ be a probability measure on $\Sigma$ such that for $\iii\in\Sigma_{k_n}$, $n\ge 2,$
\begin{equation*}
\begin{split}
      \mu([\iii0])  &= n^{-2}\mu([\iii]), \\
      \mu([\iii j]) &= N_{n}^{-1}(1-n^{-2})\mu([\iii])\text{ for }0\neq j\in I_{k_n+1}\,,
\end{split}
\end{equation*}
and
\begin{equation*}
\mu([\iii1])=\mu([\iii2])=\frac{\mu([\iii])}{2}
\end{equation*}
if $|\iii|\notin J$.

As in the previous example, let $A$ (resp.\ $S$) be the collection of all infinite (resp.\ finite) words that contain no zeros.
For each $n\in\N$ and $\iii\in S\cap \Sigma_{k_n}$, let $X_{\iii}=\{x_\iii\}$ be a metric space consisting
solely of one point. Define $X=A\cup\bigcup_{n\in\N}\bigcup_{\iii\in S\cap \Sigma_{k_n}}X_\iii$ and
$\nu=\mu|_A+\sum_{n\in\N,\iii\in S\cap\Sigma_{k_n}}\mu([\iii0])\delta_{x_\iii}$. Let $d$ be a distance on $X$
defined via \eqref{eq:d_def}.

As in Example \ref{ex:nodpp1} above, it follows that $\nu(A)>0$, $X_{\iii|_n}\subset B(\iii,r_{\iii,n})$,
and that \eqref{eq:dppfails} holds for $\iii\in A$, $r_{\iii,n}=\varepsilon_{\iii|_{k_n}}2^{-i_{k_n+1}}$.
Moreover, a simple calculation implies
\begin{align*}
\lim_{\delta \downarrow 0} \int_{X_{\iii_{k_n}}} \frac{\log\nu\bigl( B(y,\delta) \bigr)}{\log\delta}\,d\nu(y)&=0,\\
\limsup_{\delta\downarrow0}\int_{B(\iii,r_{\iii,n})\setminus X_{\iii_{k_n}}}\frac{\log\nu\bigl( B(y,\delta) \bigr)}{\log\delta}\,d\nu(y)&\le C\nu(B(\iii_{k_n},r_{\iii,n})\setminus X_{\iii_{k_n}}),
\end{align*}
where $C>0$ depends only on the doubling constant of $X$. These estimates, together with \eqref{eq:dppfails}
imply that $\ldim_1(\nu,\iii)=0$ for all $\iii\in A$.

Again, the above holds regardless of the choice of $k_n$ and thus we can choose the sequence $(k_n)$ so that
\begin{equation}\label{eq:dimeq1}
\ldimloc(\nu,\iii)=1
\end{equation}
for all $\iii\in A$. To see this, observe first that if there were no sequence $(k_n)$, i.e.\ if $J=\emptyset$,
then it would be clear that
\begin{equation}
\ldimloc(\nu,\iii)=\ldimloc(\mu,\iii)=\liminf_{n\rightarrow\infty}\frac{\log\mu([\iii|_n])}{\log\varepsilon_{\iii|_n}}= 1.
\end{equation}
and since $N_{k_n}$, and the ratios $0<\diam([\iii i])/\diam([\iii])=\varepsilon_{\iii i}/\varepsilon_\iii$
for $\iii\in\Sigma_{k_n}$, $i\in I_{k_n}$ do not depend on the choice of $k_n$, we can choose $k_{n}\gg k_{n-1}$
inductively such that \eqref{eq:dimeq1} remains true.
\end{example}

\begin{ack}
  We are grateful to Marianna Cs\"ornyei for help in constructing Example \ref{ex:nodpp0}.
\end{ack}

\bibliographystyle{abbrv}
\bibliography{Bibliography}
\end{document}